\newtheorem{definition}{Definition}[section]
\newtheorem{theorem}[definition]{Theorem}
\newtheorem{remark}[definition]{Remark}
\newtheorem{lemma}[definition]{Lemma}
\newtheorem*{theorem*}{Theorem}
\newcommand{\ii}{\ensuremath{\imath}}
\newcommand{\RR}{\ensuremath{\mathbb{R}}}
\newcommand{\NN}{\ensuremath{\mathbb{N}}}
\newcommand{\Sw}{\ensuremath{\mathcal{S}}}
\newcommand{\Sww}{\Sw}
\newcommand{\ZZ}{\ensuremath{\mathbb{Z}}}
\newcommand{\dd}{\mathrm{d}}
\newcommand{\rr}{\boldsymbol{\rho}}
\newcommand{\ww}{\boldsymbol{\omega}}
\newcommand{\mm}{\boldsymbol{\mu}}
\newcommand{\supp}{\ensuremath{\mathrm{supp}\,}}
\newcommand{\EE}{\mathcal{E}}
\newcommand{\FF}{\mathcal{F}}
\newcommand{\GG}{\mathcal{G}}
\newcommand{\Bs}{\mathcal{B}_{\s}}
\newcommand{\Dww}{C^\infty_{c}}
\newcommand{\CC}{\mathbb{C}}
\newcommand{\s}{\mathfrak{s}}
\newcommand{\T}{\mathscr{T}}
\newcommand{\DD}{\mathscr{D}}
\newcommand{\PP}{\mathscr{B}_\s}
\newcommand{\para}{\varolessthan}
\newcommand{\mpara}{\mathord{\prec\!\!\!\prec}}
\newcommand{\B}{\mathfrak{B}}
\newcommand{\rA}{\mathfrak{A}}
\newcommand{\kk}{\Psi^{j}}
\newcommand{\KK}{\Psi^{<j-1}}
\newcommand{\KKu}{\KK_{x-u}}
\newcommand{\kkv}{\kk_{x-v}}
\newcommand{\cC}{\mathcal{C}}
\newcommand{\intr}{\int_{\RR^d}}
\newcommand{\dint}{\int \!\! \int}
\newcommand{\dintr}{\int_{\RR^d} \! \int_{\RR^d}}
\newcommand{\eps}{\varepsilon}
\newcommand{\uu}{\mathbf{1}}
\newcommand{\I}{\mathcal{I}}
\newcommand{\tT}{\mathcal{T}}
\newcommand{\V}{\mathcal{V}}
\newcommand{\W}{\mathcal{W}}
\newcommand{\AN}{A_{\NN}}
\newcommand{\nDl}[1]{\|#1\|_{\DD^\gamma(\mathbb{R}^d;\tT)}}
\newcommand{\nPl}[1]{\|#1\|_{\PP^\gamma(\mathbb{R}^d;\tT)}}
\newcommand{\A}{\mathscr{A}}
\newcommand{\Eww}{C^\infty}
\newcommand{\wexps}{\omega^{\mathrm{exp}}_\sigma}
\newcommand{\wpol}{\omega^{\mathrm{pol}}}
\newcommand*\bigcdot{\mathpalette\bigcdot@{.5}}
\newcommand*\bigcdot@[2]{\mathbin{\vcenter{\hbox{\scalebox{#2}{$\m@th#1\bullet$}}}}}
\newcommand{\scl}{\bigcdot}
\newcommand{\plc}{\cdot}
\newcommand{\Warning}{{\color{red}\fontencoding{U}\fontfamily{futs}\selectfont\char 66\relax}}
\newcommand{\Span}[1]{\mathrm{span}\left\{#1\right\}}
\newcommand{\lsim}{\precsim}
\newcommand{\gsim}{\succsim}
\newcommand{\und}{\qquad \mbox{and} \qquad}
\newcommand{\oset}[2]{\overset{\tiny \mbox{#1}}{#2}}
\newcommand{\NNx}{\NN_{\times}}
\title{A Littlewood-Paley description of modelled distributions}
\author{Jörg Martin\thanks{Financial support by the DFG via Research Training Group RTG 1845 is gratefully acknowledged.} \\
  Institut f\"ur Mathematik \\
  Humboldt--Universit\"at zu Berlin \\
  \texttt{martin@math.hu-berlin.de}  
  \and
  Nicolas Perkowski\thanks{Financial support by the DFG via the Heisenberg Program and via Research Unit FOR 2402 is gratefully acknowledged.} \\
  Max-Planck-Institute for Mathematics in the Sciences, Leipzig \\
  \& Humboldt--Universit\"at zu Berlin \\
  \texttt{perkowsk@math.hu-berlin.de}}
\newglossaryentry{varphij}
{
	name={$\varphi_j$},
	description={Dyadic partition unity},
	sort={phij}
}
\newglossaryentry{varDeltaj}
{
	name={$\varDelta_j$},
	description={Littlewood-Paley block constructed from the dyadic partititon of unity \gls[noindex]{varphij}},
	sort={deltaj}
}
\newglossaryentry{kk}
{
	name ={$\kk$},
	description={Fourier transform of of the dyadic partition function \gls[noindex]{varphij}},
	sort={psij}
}
\newglossaryentry{KK}
{
	name={$\KK$},
	description={Abbreviation for $\sum_{i<j} \Psi^i$},
	sort={psijj}
}
\newglossaryentry{s}
{
	name={$\s$},
	description={Scaling vector},
	sort={s}
}
\newglossaryentry{ns}
{
	name={$\|\cdot\|_\s$},
	description={Scaled distance for some scaling \gls[noindex]{s}},
	sort={0s}
}
\newglossaryentry{GG}
{
	name={$\GG,\,\GG^\eps$},
	description={Bravais lattices, $\GG^\eps=\eps\cdot \GG$ denotes the scaled lattice},
	sort={g}
}
\newglossaryentry{widehatGG}
{
	name={$\widehat{\GG}$},
	description={Fourier cell for a Bravais lattice $\GG$},
	sort={g}
}
\newglossaryentry{isotropic}
{
	name={Isotropic},
	description={The scaling vector \gls[noindex]{s} equals $(1,\ldots,1)$},
	sort={isotropic}
}
\newglossaryentry{ww}
{
	name={$\ww$},
	description={Set of functions $\wpol,\,\wexps$ that classify weights},
	sort={o}
}
\newglossaryentry{rromega}
{
	name={$\rr(\omega)$},
	description={The set of weights, whose growth/decay is controlled by $\omega\in \mbox{\gls[noindex]{ww}}$},
	sort={ro}
}
\newglossaryentry{mathscrEeps}
{
	name={$\mathscr{E}^\eps$},
	description={Extension from Bravais lattices $\GG^\eps$ to $\RR^d$},
	sort={Eeps}
}
\newglossaryentry{diamond}
{
	name={$\diamond$},
	description={Symbol used for Wick products or Skohorod integration},
	sort={0}
}
\newglossaryentry{NNds}
{
	name={$|\NN^d|_\s$},
	description={Abbreviation for the set $\{|k|_\s\,\vert\,k\in \NN^d\}$ with the scaling vector \gls[noindex]{s} },
	sort={NNds}
}
\newglossaryentry{Bs01}
{
	name={$B_\s(0,1)$},
	description={Scaled unit ball for the scaling vector \gls[noindex]{s}},
	sort={Bs}
}
\newglossaryentry{Bgammapqs}
{
	name={$\B^\gamma_{p,q,\s}$},
	description={Besov space},
	sort={Bgpqs}
}
\newglossaryentry{cCgammas}
{
	name={$\cC^\gamma_\s$},
	description={Hölder-Zygmund space},
	sort={Cgs}
}
\newglossaryentry{Hgamma}
{
	name={$H^\gamma$},
	description={Fractional Sobolev space},
	sort={Hg}
}
\newglossaryentry{PFPi}
{
	name={$P(F,\Pi),\,P(F,\Gamma^\alpha)$},
	description={Paraproducts on a regularity structure},
	sort={PFP}
}
\newglossaryentry{DDgamma}
{
	name={$\DD^\gamma$},
	description={Space of modelled distributions},
	sort={Dg}
}
\newglossaryentry{DDetagamma}
{
	name={$\DD^{[\eta,\gamma]}$},
	description={Singular, modelled distributions},
	sort={Deg}
}
\newglossaryentry{A}
{
	name={$\A$},
	description={Green's function of a Fourier multiplier $a(D)$},
	sort={A}
}
\newglossaryentry{NN}
{
	name={$\NN$},
	description={Natural numbers including $0$, $\NN=\{0,1,2,\ldots\}$},
	sort={N}
}
\newglossaryentry{EEOmega}
{
	name={$\EE_{\Omega}$},
	description={Whitney extension for modelled distributions},
	sort={Eo}
}
\newglossaryentry{VbackslashW}
{
	name={$\V\backslash \W$},
	description={Complement of a sector},
	sort={VW}
}
\newglossaryentry{T}
{
	name={$\T=(A,\tT,G)$},
	description={Regularity structure},
	sort={T0}
}
\newglossaryentry{overlineT}
{
	name={$\overline{\T}=(\overline{A},\overline{\tT},\overline{G})$},
	description={Polynomial regularity structure},
	sort={T1}
}
\newglossaryentry{FF}
{
	name={$\FF$},
	description={Fourier transform with convention: $\FF f(x)=\int \dd \xi\, e^{2\pi \ii x \xi }\,f(\xi)$},
	sort={f}
}
\newglossaryentry{ast}
{
	name={$\ast$},
	description={Convolution on $\RR^d$},
	sort={0}
}
\newglossaryentry{D}
{
	name={$D$},
	description={Symbol used for Fourier multipliers on $\RR^d$},
	sort={D}
}
\newglossaryentry{AN}
{
	name={$\AN$},
	description={Shorthand for $\AN=A+|\NN^d|_\s$, where $A$ is the index set of a regularity structure \gls[noindex]{T}},
	sort={AN}
}
\newglossaryentry{PiGamma}
{
	name={$(\Pi,\Gamma)$},
	description={Model on a regularity structure \gls[noindex]{T}},
	sort={P}
}
\newglossaryentry{structurecondition}
{
	name={Structure condition},
	description={Condition on the components of a modelled distribution},
	sort={S}
}
\newglossaryentry{mmomega}
{
	name={$\mm(\omega)$},
	description={Set of jump measures for symmetric random walks},
	sort={mo}
}
\newglossaryentry{spectralsupport}
{
	name={Spectral support},
	description={The support of the Fourier transform of an (ultra-) distribution},
	sort={ss}
}
\newglossaryentry{varphilambda}
{
	name={$\varphi^\lambda$},
	description={$L^1$ scaling of $\varphi$},
	sort={pl}
}
\newglossaryentry{times}
{
	name={$\times$},
	description={Symbol used to connect products with factors in different lines},
	sort={0}
}
\newglossaryentry{Sww}
{
	name={$\Sww$},
	description={Ultra-differentiable Schwartz functions},
	sort={So}
}
\newglossaryentry{Eww}
{
	name={$\Eww$},
	description={Ultra-differentiable functions},
	sort={Co}
}
\newglossaryentry{Cnb}
{
	name={$C^n_b$},
	description={Functions with bounded derivatives up to order $n$},
	sort={Cnb}
}
\newglossaryentry{lesssim}
{
	name={$\lesssim$},
	description={Means $\leq$ ``up to a multiplicative, deterministic constant''},
	sort={0l}
}
\newglossaryentry{lsim}
{
	name={$\lsim$},
	description={Used for indices $i,j\in \ZZ$. Means $\leq$ ``up to an additive, deterministic constant''},
	sort={0l}
}
\newglossaryentry{Rgammaxh}
{
	name={$R^\gamma_{x;h}$},
	description={(Anisotropic) Taylor remainder, $R^\gamma_{x;h}=\mathrm{Id}-\mbox{\gls[noindex]{Tgammaxh}}$},
	sort={Rgxh}
}
\newglossaryentry{Tgammaxh}
{
	name={$T^\gamma_{x;h}$},
	description={(Anisotropic) Taylor polynomial up to order $\gamma$},
	sort={Tgxh}
}
\newglossaryentry{NNdgammal}
{
	name={$\NN^d_{<\gamma}$},
	description={Set of $k\in \NN^d$ with $|k|_\s<\gamma$},
	sort={Ndg0}
}
\newglossaryentry{NNdgammag}
{
	name={$\NN^d_{>\gamma}$},
	description={``Boundary'' of $\NN^d_{<\gamma}$},
	sort={Ndg1}
}
\newglossaryentry{mathfrakmk}
{
	name={$\mathfrak{m}(k)$},
	description={Defined for $k\in \NN^d\backslash\{0\}$ as $\mathfrak{m}(k)=\min\{j\,  \vert \,k_j\neq 0\}$},
	sort={m}
}
\newglossaryentry{LpRRdrho}
{
	name={$L^p(\RR^d,\rho)$},
	description={Weighted $L^p$ space with the convention $\|f\|_{L^p(\RR^d,\rho)}=\|f\cdot \rho\|_{L^p(\RR^d)}$},
	sort={Lp}
}
\newglossaryentry{Gammaalphatau}
{
	name={$\Gamma^\alpha \tau$},
	description={Projection of $\Gamma \tau$ onto $\tT_\alpha$ on some regularity structure \gls[noindex]{T}},
	sort={Gat}
}
\newglossaryentry{taualpha}
{
	name={$\tau^\alpha,\,\tau^{<\gamma}$},
	description={Projection of $\tau\in \tT$ on subspaces for some regularity structure $\T=(A,\tT,G)$},
	sort={ta}
}
\newglossaryentry{functionlike}
{
	name={function like},
	description={Attribute for a sector of non-negative regularity},
	sort={functionlike}
}
\newglossaryentry{tauuu}
{
	name={$\tau^{\uu}$},
	description={Coefficient of $\tau\in \tT$ in front of $\uu$},
	sort={tau1}
}
\newglossaryentry{OmegaT}
{
	name={$\Omega^T$},
	description={Denoting the set $(0,T)\times \RR^{d-1}$},
	sort={OT0}
}
\newglossaryentry{OmegaTt}
{
	name={$\Omega^T_t$},
	description={Denoting the set $(t,T)\times \RR^{d-1}$ for $t\in (0,T)$},
	sort={OT1}
}
\begin{document}

\maketitle

\begin{abstract}
We exhibit a fundamental link between Hairer's theory of regularity structures \cite{RegularityStructures} and the paracontrolled calculus of \cite{GIP}. By using paraproducts we provide a Littlewood-Paley description of the spaces of modelled distributions in regularity structures that is similar to the Besov description of classical Hölder spaces. 
\end{abstract}


\section{Introduction}

This article builds a bridge between two different approaches that arose from the study of singular stochastic partial differential equations (singular SPDEs). These equations are ill posed because of the interplay of highly irregular noise with nonlinearities, which often leads to resonances that may have to be removed by a renormalization procedure. A new way of describing distributions, their regularity, and operations on them was required in order to give a meaningful solution theory for singular SPDEs, and this was implemented differently by different groups. Our aim is to reveal a link between  \emph{regularity structures}, which were developed by Hairer in \cite{RegularityStructures} and applied abundantly since (for example \cite{HairerQuastel, HairerLabbeR3, Hairer2015Central, HP15, Cannizzaro2017Malliavin, MatetskiHairer}), and the Fourier approach of \emph{paracontrolled distributions}, which first appeared in the work \cite{GIP} by Gubinelli, Imkeller and Perkowski and was used for instance in \cite{CatellierChouk, CannizzaroChouk, WeberMourratPhi43, KPZreloaded, ChoukAllez, Zhu2015, DiscreteParacontrolled}. These two techniques are by no means the only tools for  singular SPDEs and alternative views were provided by Kupiainen in his application of renormalization group techniques \cite{KupiainenPhi43, KupiainenKPZ}, and by Otto and Weber with their rough path flavored approach~\cite{Otto2016, Otto2018}.

In this paper however we will only focus on the comparison of two seemingly distinct notions from regularity structures and paracontrolled distributions. Central to the theory of \cite{RegularityStructures} is the concept of a \emph{modelled distribution}. These are generalized H\"older functions that are classified by spaces called $\DD^\gamma(\RR^d;\tT)$ with $\gamma\in \RR$. The sets $\DD^\gamma(\RR^d;\tT)$ collect functions $F\colon\RR^d\rightarrow \tT$ that take values in a graded vector space $\tT=\bigoplus_{\alpha\in A} \tT_\alpha$ (with $A\subseteq \RR$ and with $\tT_\alpha$ being normed spaces) and whose components $F^\alpha\colon\RR^d\rightarrow \tT_\alpha$ satisfy for $x,\,y\in \RR^d$ \footnote{Instead of $|y-x|$ we will consider a scaled distance 
$\|y-x\|_{\s}$ below, as in \cite{RegularityStructures}.} 
\begin{align}
\label{eq:IntroductionModelledDistribution}
\|F^\alpha_y-\Gamma_{yx}^\alpha F_x \|_{\tT_\alpha} \lesssim |y-x|^{\gamma-\alpha}
\end{align} 
for $\alpha<\gamma$.  The object $\Gamma_{yx}^\alpha F_x \in \tT_\alpha$ in \eqref{eq:IntroductionModelledDistribution}  should be thought of as a local ``Taylor-like'' expansion. 
The most basic case arises when we consider the so called polynomial regularity structure $\overline{\tT}$, where $F^\alpha$ just represents the derivative of order $\alpha$ of a function $f$ and 
$\Gamma_{yx}^\alpha F_x$ incorporates the Taylor expansion of this derivative, so that $\mathscr{D}^\gamma(\RR^d;\overline{\tT})$ just represents the  $\gamma$-Hölder continuous functions (for a precise formulation see Lemma~\ref{lem:HoelderAsModelled} below). It is rather classical that the space of $\gamma$-Hölder continuous functions can also be described via Littlewood-Paley theory as a Besov space $\mathcal{B}^\gamma(\RR^d)$, so that there are in fact various ``classical'' descriptions of $\mathscr{D}^\gamma(\RR^d;\overline{\tT})$. However, in general the space $\mathscr{D}^\gamma(\RR^d,\tT)$ can be of a far more complex structure and usually it involves expansions in non-polynomial objects, which are typically constructed from an underlying SPDE. In such cases there is no description of $\mathscr{D}^\gamma(\RR^d;\tT)$ in terms of more familiar function spaces. The central role of modelled distributions in the theory of regularity structure is due to the so called reconstruction theorem \cite[Theorem 3.10]{RegularityStructures} which shows that there is a (unique) distribution $f\in \Sw'(\RR^d)$ that is described by the modelled distribution $F\in \DD^{\gamma}(\RR^d;\tT)$. On the level of $\DD^\gamma(\RR^d;\tT)$ there is a robust theory for operations such as multiplication whose execution is typically hazardous on $\Sw'(\RR^d)$, and describing the objects in an SPDE via modelled distributions leads to a robust formulation of the Schauder theory. But since there is no description of $\mathscr{D}^\gamma(\RR^d;\tT)$ in terms of classical function spaces, all these results have to be derived from scratch and are typically quite cumbersome to prove.

The linchpin in the paracontrolled framework is the notion of a \emph{paracontrolled distribution}. Roughly speaking, a distribution $f\in \Sw'(\RR^d)$, often the solution to an SPDE, is called paracontrolled if it can be smoothened by the subtraction of a \emph{paraproduct}, a sort of ``frequency modulation'' of a given reference distribution. This allows us to transform the considered SPDE into an equation that can be solved via classical Schauder theory. Although the paracontrolled approach might seem quite different from the one in regularity structures, note that an increase of smoothness could be interpreted as the cancellation of fluctuations on small scales. From this point of view the paraproduct seems to capture the local behaviour of the considered distribution. This philosophy seems reminiscent to the idea of a local expansion in \eqref{eq:IntroductionModelledDistribution}. Moreover, often similar objects appear in the expansion $\Gamma^\alpha_{yx}F_x$ in \eqref{eq:IntroductionModelledDistribution} and the paraproducts used to control the solutions to SPDEs: Compare for instance the solution theory for the parabolic Anderson model \cite{RegularityStructures, GIP}, the $\Phi^4_3$ model \cite{RegularityStructures, CatellierChouk} or the KPZ equation \cite{FrizHairer, KPZreloaded}. Based on this similarity it has been conjectured in \cite{GIP} that there might be a one-to-one correspondence between modelled distributions and paracontrolled distributions. Here we give a precise formulation of that conjecture and we prove it.

More precisely, we propose the following description of a modelled distribution via paraproducts: In Definition \ref{def:Paraproducts} we define the paraproducts $P(F,\Gamma^\alpha)$ for $\alpha \in A$ and $F\colon\RR^d\rightarrow \tT=\bigoplus_{\alpha\in A} \tT_\alpha$. We then say that $F$ is in the space $\mathscr{B}^\gamma(\RR^d;\tT)$ if for $\alpha<\gamma$
\begin{align}
\label{eq:IntroductionPgamma}
F^\alpha-P(F,\Gamma^\alpha)\in \mathcal{B}^{\gamma-\alpha}\,, 
\end{align}
where $\mathcal{B}^{\gamma-\alpha} = B^{\gamma-\alpha}_{\infty,\infty}$ denotes a Besov space, and if further a certain \emph{structure condition} (Definition \ref{def:StructureCondition} below) is satisfied. In Theorem \ref{thm:BridgeTheorem} we then show that
\begin{theorem*}
For $\gamma\in\RR\backslash (\NN+A)$ we have
\begin{align}
\label{eq:IntroductionBridge}
\DD^\gamma(\RR^d;\tT)=\mathscr{B}^\gamma(\RR^d;\tT)\,
\end{align}
with equivalent norms.
\end{theorem*}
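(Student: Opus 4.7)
The plan is to prove the two inclusions $\DD^\gamma(\RR^d;\tT)\subseteq \mathscr{B}^\gamma(\RR^d;\tT)$ and $\mathscr{B}^\gamma(\RR^d;\tT)\subseteq \DD^\gamma(\RR^d;\tT)$ separately, lifting the classical Littlewood--Paley characterization $\cC^\gamma=\mathcal{B}^\gamma_{\infty,\infty}$ (for non-integer $\gamma$) from Hölder--Zygmund spaces to general regularity structures. The polynomial case $\overline{\tT}$ serves as a blueprint: there $\Gamma^\alpha_{yx}F_x$ is a Taylor polynomial around $x$, the paraproduct $P(F,\Gamma^\alpha)$ essentially reproduces Bony's paraproduct between the Taylor-type components of $F$, and the identity $F^\alpha-P(F,\Gamma^\alpha)\in\mathcal{B}^{\gamma-\alpha}$ reduces to the standard Littlewood--Paley bound on the Taylor remainder $R^\gamma_{x;h} f$. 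The whole argument should promote this identification component by component along the grading $\alpha\in A_{\NN}$.

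For the direction $\DD^\gamma\hookrightarrow \mathscr{B}^\gamma$, I would fix $\alpha<\gamma$ and expand the Littlewood--Paley block $\varDelta_j(F^\alpha-P(F,\Gamma^\alpha))(x)$ as an iterated integral against the kernels $\Psi^j$ and $\Psi^{<j-1}$. By inserting an appropriate zero of the form $\Gamma^\alpha_{yz}F_z$ between the two convolutions, each block is rewritten as an average of increments $F^\alpha_y-\Gamma^\alpha_{yz}F_z$ on spatial scale $2^{-j}$, and the pointwise bound \eqref{eq:IntroductionModelledDistribution} directly yields $\|\varDelta_j(F^\alpha-P(F,\Gamma^\alpha))\|_{L^\infty}\lesssim 2^{-j(\gamma-\alpha)}$, giving the desired Besov regularity. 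The structure condition should then be read off from $\Gamma_{xx}=\mathrm{Id}$ applied to the low-frequency part of the paraproduct.

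For the converse direction $\mathscr{B}^\gamma\hookrightarrow\DD^\gamma$, I would proceed by induction on $\alpha\in A_{\NN}$ from the top downward. Fixing $x,y\in\RR^d$, let $j_0$ satisfy $2^{-j_0}\simeq\|y-x\|_\s$ and decompose
\begin{align*}
F^\alpha_y-\Gamma^\alpha_{yx}F_x \;=\; \bigl(F^\alpha_y-P(F,\Gamma^\alpha)_y\bigr) \;+\; \bigl(P(F,\Gamma^\alpha)_y-\Gamma^\alpha_{yx}F_x\bigr).
\end{align*}
The first bracket is controlled by the Besov hypothesis after splitting the dyadic sum at $j_0$ via Bernstein inequalities. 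For the second bracket I would split $P(F,\Gamma^\alpha)$ into its low-frequency part (block indices below $j_0$) and its high-frequency tail (above $j_0$): the low-frequency part should reconstruct $\Gamma^\alpha_{yx}F_x$ up to an error of order $\|y-x\|_\s^{\gamma-\alpha}$ via a Taylor-like expansion of the kernels around $x$ together with the already-established $\DD$-bounds for higher components $F^\beta$ (induction hypothesis), while the high-frequency tail is absorbed using the structure condition.

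The main technical obstacle lies in closing this induction: $P(F,\Gamma^\alpha)$ mixes all components $F^\beta$ through the off-diagonal pieces $\Gamma^\alpha_\beta$, and to convert local averages of $F^\beta$ into the required expansion $\Gamma^\alpha_{yx}F_x$ one must carefully track Taylor expansions of the convolution kernels against $\Gamma^\alpha_\beta$ at the matching scale $2^{-j}$. The hypothesis $\gamma\notin \NN+A$ is essential here: it guarantees that no exponent $\gamma-\alpha$ lands on an integer, so that the classical Besov/Hölder mismatch at integer regularity is avoided and the induction closes with strict inequalities at every level.
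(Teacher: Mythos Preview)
Your overall architecture matches the paper's: the easy direction via the $\DD^\gamma$-increment bound on each dyadic block, and the hard direction via a downward induction on $\alpha\in A$ with a dyadic split at the scale $2^{-j_0}\simeq\|y-x\|_\s$. The easy direction and the remark on $\gamma\notin A_\NN$ are essentially what the paper does.

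The gap is in your decomposition for the hard direction. Writing
\[
F^\alpha_y-\Gamma^\alpha_{yx}F_x \;=\; \bigl(F^\alpha_y-P(F,\Gamma^\alpha)_y\bigr) \;+\; \bigl(P(F,\Gamma^\alpha)_y-\Gamma^\alpha_{yx}F_x\bigr)
\]
makes the first bracket equal to $F^{\sharp,\alpha}_y$, which is merely $O(1)$: membership in $\mathcal B^{\gamma-\alpha}$ gives boundedness, not smallness in $\|y-x\|_\s$. Splitting its Littlewood--Paley series at $j_0$ controls only the high-frequency tail; the low-frequency part stays $O(1)$, and there is no mechanism in your second bracket to cancel it (indeed, forcing such a cancellation is tautologically equivalent to the bound you are trying to prove). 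The paper's fix is to additionally subtract the \emph{Taylor jet of $F^{\sharp,\alpha}$ at $x$}: the first group becomes the Taylor remainder $F^{\sharp,\alpha}_y - T^{\gamma-\alpha}_{x;y-x}F^{\sharp,\alpha}$, which the Besov--H\"older equivalence (Lemma~\ref{lem:Hoelder}, this is where $\gamma\notin A_\NN$ is used) bounds by $\|y-x\|_\s^{\gamma-\alpha}$. The leftover is then
\[
P(F,\Gamma^\alpha)_y - P(F,\Gamma^\alpha)_x + \sum_{0<|k|_\s<\gamma-\alpha}\tfrac{1}{k!}\partial^k F^{\sharp,\alpha}_x\,(y-x)^k - \sum_{\alpha'>\alpha}\Gamma^\alpha_{yx}F^{\alpha'}_x,
\]
and the structure condition is used not to ``absorb a high-frequency tail'' but precisely to identify this expression as the limit of a dyadic sum whose terms, after inserting $\Gamma_{ux}F_x$ inside the paraproduct and using the induction hypothesis on the increments $F^{\alpha'}_u-\Gamma^{\alpha'}_{ux}F_x$, are bounded by $2^{-j(\gamma-\alpha)}$ and can be split at $j_0$.

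Two further points your sketch underplays: in the base case $\alpha=\max A$ one has $P(F,\Gamma^\alpha)=0$, but the structure condition is still needed to kill the intermediate derivatives $\partial^k F^\alpha_x$ for $0<|k|_\s<\gamma-\alpha$ before Lemma~\ref{lem:Hoelder} applies; and in the inductive step the induction hypothesis enters through the increments $F^{\alpha'}_u-\Gamma^{\alpha'}_{ux}F_x$ \emph{inside} the paraproduct (after writing $\Gamma^\alpha_{vu}F_u=\Gamma^\alpha_{vu}(F_u-\Gamma_{ux}F_x)+\Gamma^\alpha_{vx}F_x$), not merely through the off-diagonal pieces $\Gamma^\alpha_\beta$.
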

One might consider $\mathscr{B}^\gamma(\RR^d;\tT)$ as a type of Besov space on $\tT$. Indeed, in the case of the polynomial regularity structure $\tT=\overline{\tT}$ it turns out that $\mathscr{B}^\gamma(\RR^d;\tT)$ just collapses to the classical Besov space $\mathcal{B}^\gamma$, so that \eqref{eq:IntroductionBridge} can be interpreted as a generalization of the Besov description of Hölder continuous functions. Since both, $\mathcal{B}^\gamma$ and $P(F,\Gamma^\alpha)$, will be built using Littlewood-Paley theory, \eqref{eq:IntroductionBridge} is a Littlewood-Paley description of $\mathscr{D}^\gamma(\RR^d;\tT)$ and moreover a characterization in terms of more classical spaces from analysis.

Although we consider the theorem above an interesting observation in its own right, one should also judge its importance from a more practical point of view. For many operators, such as Fourier multipliers, it is more natural to work with the spectral description of function spaces provided by the Littlewood-Paley theory. The theorem above therefore yields a first insight on how to describe modelled distributions in this formulation. However, an elaboration of a full, say, Schauder-like statement based on \eqref{eq:IntroductionBridge} would still require a lot of technical work, which is why we refrain from doing so in this paper. For ideas and results in that direction we refer to the Ph.D. thesis of the first author \cite{Dissertation}.

 \subsubsection*{Structure of this article}
 
Section \ref{sec:background} recalls the fundamental concepts we will use. These are mostly some definitions and basic facts concerning Besov spaces and some elementary definitions from the theory of regularity structures. Since statements about Besov regularity, such as \eqref{eq:IntroductionPgamma}, actually concern objects with values in Banach spaces, we will also repeat the essentials of vector-valued distributions.

Conceptually new  definitions are given in Section \ref{sec:paramodelled}, where we introduce the paraproducts $P(F,\Gamma^\alpha)$ and the space $\mathscr{B}^\gamma(\RR^d;\tT)$ mentioned above. 

The main result of this paper, that is identity \eqref{eq:IntroductionBridge}, is presented and proved in Section \ref{sec:MainTheorem}. 

\subsection*{Notation}
We will write $x\lesssim y$
to denote an inequality of the form $x\leq C \cdot y$ for some constant $C>0$, which is independent of $x$ and $y$. We also use the notation $x\gtrsim y$ to describe the relation $y\lesssim x$. We write $x\approx y$ to indicate that both statements, $x\lesssim y$ and $x\gtrsim y$, are true.

For indices $i,j\in \ZZ$ we will write
\glsadd{lsim}
\begin{align}
\label{eq:lsim}
i\lsim j \qquad &\Leftrightarrow i \leq j+N
\end{align}
where $N\in \ZZ$ is a deterministic constant which is independent of $i$ and $j$, i.e. if $2^i \lsim 2^j$. We also use $i\gsim j$ to denote $i\lsim j$ and $i \sim j$ whenever $i \lsim j$ and $j\lsim i$. The symbol $\NN=\{0,1,2,\ldots\}$ denotes in this article the natural numbers including $0$. The set $\NNx:=\NN\backslash\{0\}=\{1,2,3,\ldots\}$ is then defined by exclusion of the latter.  

We use the symbol $C^r(\RR^d)$ for the set of $r\in \NN$ times differentiable, complex-valued functions and set $C^\infty(\RR^d)=\bigcap_{r\in \NN} C^r(\RR^d)$. To indicate bounded derivates we add an index $b$, so that $C_b(\RR^d)$ denotes for instance the set of bounded, continuous and complex-valued functions, equipped with the norm
\begin{align*}
\|f\|_{C_b(\RR^d)}=\sup_{x\in \RR^d}|f(x)|\,.
\end{align*}
$C^\infty_c(\RR^d)$ is used for the subset of functions in $C^\infty(\RR^d)$ that have compact support. $\Sw(\RR^d)$ denotes the Schwartz space. A codomain of a function space which is different from the complex numbers will be mentioned in the notation via a semicolon, so that $C^\infty(\RR^d;[0,1])$ denotes the smooth functions on $\RR^d$ which take values in the unit interval and $C^\infty_c(\RR^d;X)$ is the set of smooth, compactly supported functions with values in a Banach space $X$. Banach-valued distributions $\Sw'(\RR^d;X)$ and Besov spaces $\Bs^\gamma(\RR^d;X)$ will be introduced in Subsection~\ref{subsec:Besov}.

Given a function $f$ on $\RR^d$ we will occasionally write shorthand $f_x:=f(x)$ for $x\in \RR^d$ and use the ``physics convention'' in writing the differential right after the integral sign, for instance
\begin{align*}
\int_{\RR^d}\dd x\, f_x\,.
\end{align*}

We follow \cite{RegularityStructures} in the definition of anisotropic distances and scalings:  Given a \emph{scaling vector}
\glsadd{s}
\begin{equation}
\label{eq:ScalingVector}
\s\in \NNx^d\,,
\end{equation}
we define for $x\in \RR^d$ the \emph{scaled ``norm''}
\begin{align}
\label{eq:AnisotropicDistance}
\|x\|_\s:=\sum_{i=1}^d |x_i|^{1/\s_i}\,,
\end{align}
and we define the scaled unit ball as
\glsadd{Bs01}
\begin{align*}
B_\s(0,1)=\{x\in \RR^d\,\vert\, \|x\|_\s <1\}\,.
\end{align*}

For multi-indices $k\in \NN^d$ we set
\[
	|k|_\s=\sum_{i=1}^d k_i \s_i\,,\qquad |\s|=\sum_{i=1}^d \s_i\,.
\]
Finally we write for positive $a>0$
\begin{align}
\label{eq:as}
a^{\s}:=\mathrm{diag}(a^{\s_1},\,a^{\s_2},\, \ldots,a^{\s_d})\,,
\end{align}
where $\mathrm{diag}(\,\cdot\,)$ denotes the diagonal matrix with diagonal ``$\,\cdot\,$''. For $\varphi\in L^1(\RR^d)$ we sometimes need the notation $\varphi^\lambda$ by which we mean the $L^1$-scaling
\glsadd{varphilambda}
\begin{align*}
\varphi^\lambda:=\lambda^{-|\s|}\varphi(\lambda^{-\s}\plc)\,,
\end{align*}
so that $\varphi^\lambda_{x}$ should be read as 
\begin{align}
\label{eq:Scaledvarphi}
\varphi^\lambda_{x}=\lambda^{-|\s|}\varphi(\lambda^{-\s}x)\,,
\end{align}
where the \emph{matrix} $\lambda^{-\s}$ is defined as in \ref{eq:as}. 
\begin{remark} \Warning  Note that we slightly differ here from \cite{RegularityStructures}, where this notation denotes the \emph{function} $z\mapsto \lambda^{-|\s|}\varphi(\lambda^{-\s}(z-x))$ instead. For us $\varphi^\lambda_x$ denotes a complex number and not a function. 
\end{remark}

\section{Background}
\label{sec:background}

In this section we recall the basic definitions we will make use of in this article. In Subsection~\ref{subsec:Besov} we repeat the definition and basic properties of Besov spaces. We need two slightly unusual modifications which is the allowance for \emph{anisotropic} smoothness as well as the possibility for the considered distributions to take values in a Banach space. We therefore start with a short repetition concerning vector-valued distributions. 

In Subsection \ref{subsec:RegularityStructures} we recall a few elementary definitions from the theory of regularity structures. Essential to us is Definition~\ref{def:Dgamma} which introduces the space $\DD^\gamma(\RR^d;\tT)$ of modelled distributions. 

\subsection{Anisotropic Besov spaces}
\label{subsec:Besov}

\subsubsection*{Banach-valued distributions}
It will be convenient for us to work with vector-valued distributions in this article. Although this theory is rather classical in the literature (see e.g. \cite{SchmeisserTriebel, BanachValuedDistributions} or even \cite{Schwartz}), the scalar-valued case is usually a more familiar concept, so that we here briefly repeat the most important aspects of the vector modification of distribution theory.  

For a Banach space $X$ we define the \emph{Banach-valued} Schwartz distributions $\Sw'(\RR^d;X)$ as the  set of continuous linear functionals
\begin{align*}
f:\,\Sw(\RR^d)\rightarrow X\,,
\end{align*}
where $\Sw(\RR^d)$ is just the classical space of complex-valued Schwartz functions. A measurable functions $f:\RR^d\rightarrow X$ such that
\begin{align}
\label{eq:BochnerRepresentation}
 f(\varphi):=\intr \dd x \,f(x)\varphi(x)  
 \end{align} 
 is well-defined for any $\varphi \in \Sw(\RR^d)$ as a Bochner integral can then be identified via \eqref{eq:BochnerRepresentation} with a distribution in $\Sw'(\RR^d;X)$. The elements of $\Sw'(\RR^d;X)$ can therefore be interpreted as generalized functions with values in $X$. Taking $X=\mathbb{C}$ we simply have
\begin{align*}
 \Sw'(\RR^d;\mathbb{C})=\Sw'(\RR^d)\,,
\end{align*}
where $\Sw'(\RR^d)$ is the usual space of tempered distributions. 

 Most concepts which are known for $X=\mathbb{C}$ carry over to the general case. We can for instance define addition and multiplication by constants via
 \begin{align*}
 (f+\tilde{f})(\varphi):=f(\varphi)+\tilde{f}(\varphi),\qquad (c\cdot f)(\varphi):=f(c\cdot \varphi)\,.
 \end{align*}
 for $f,\tilde{f}\in \Sw'(\RR^d;X),\,\varphi\in \Sw(\RR^d)$ and $c\in \CC$. 
 Multiplication with a smooth $\psi\in C^\infty(\RR^d)=C^\infty(\RR^d;\CC)$ whose derivatives grow at most polynomially can further be defined as
 \begin{align*}
 (\psi\cdot f)(\varphi):=f(\psi\cdot \varphi)\,.
 \end{align*}
The support of a distribution $f\in \Sw'(\RR^d;X)$ is defined as
 \begin{align*}
 \supp f := \overline{\{x\in \RR^d\,\vert\, \forall r>0 \,\exists \varphi\in C^\infty_c(B(x,r))\,\text{s.t.}\,f(\varphi)\neq 0 \}}\,.
 \end{align*} 
 There is also a Fourier transform $\FF,\,\FF^{-1}$ given on $\Sw'(\RR^d;X)$, defined via
 \begin{align*}
 \FF f(\varphi):=f(\FF \varphi) \und \FF^{-1} f(\varphi):=f(\FF^{-1}\varphi)
 \end{align*}
 for $f\in \Sw'(\RR^d;X)$ and $\varphi\in \Sw(\RR^d)$ and where we use the convention
 \begin{align*}
 \FF \varphi(x)=\intr \dd \xi\, e^{-2\pi \ii x\scl \xi} \varphi(\xi) \und
\FF^{-1} \varphi(x)=\intr \dd \xi\, e^{2\pi \ii x\scl \xi} \varphi(\xi)\,,
 \end{align*}
 where $\scl$ denotes the scalar product on $\RR^d$. As in the case $X=\CC$ we see that $\FF$ and $\FF^{-1}$ define continuous bijections on $\Sw'(\RR^d;X)$ that are inverse to each other.  
As in the scalar case, that is $X=\CC$, the Fourier transform of every compactly supported $f\in \Sw'(\RR^d;X)$ is smooth. We further have the usual relations for convolution and multiplication between say $f\in \Sw'(\RR^d;X)$ and $\psi\in \Sw(\RR^d)=\Sw(\RR^d;\CC)$:
\begin{align*}
\FF(\psi\ast f)=\FF\psi \cdot \FF f,\qquad \FF^{-1}(\psi\ast f)=\FF^{-1}\psi \cdot \FF^{-1} f\,,\\
\FF(\psi \cdot f)=\FF \psi \ast \FF f,\qquad \FF^{-1}(\psi\cdot f)=\FF^{-1}\psi \ast \FF^{-1} f\,,
\end{align*}
where convolutions such as $\psi\ast f$ are defined as 
\begin{align*}
(\psi \ast f)(\varphi):=f(\psi\ast \varphi_{-\cdot})\,.
\end{align*}
Notice that in contrast to the case $X=\CC$, in general there is no meaningful convolution concept between two distributions in $\Sw'(\RR^d;X)$ as $X$ is not equipped with a product. 

We use the term \emph{spectral support} to denote the support of the Fourier transform of an $X$-valued distribution. 

\subsubsection*{Besov spaces}

The following types of sets will be the building blocks for our definitions below. 

\begin{definition}
\label{def:RectangularAnnulus}
We say that a set $\B\subseteq \RR^d$ is a box if there are $a_1,\ldots,a_d> 0$ such that $\B=\bigtimes_{i=1}^d [-a_i,a_i]$. A set $\rA\subseteq \RR^d$ is a rectangular annulus if there are two boxes $\B,\,\tilde{\B}\subseteq \RR^d$ with $ \B\subseteq\tilde{\B}$ and $\partial \B\cap \partial \tilde{\B}=\varnothing$ such that $\rA=\overline{\tilde{\B}\backslash \B} $.
\end{definition}
We then have the following elementary, geometric properties.
\begin{lemma}
\label{lem:FinitelyManyAnnuli}
Let $\rA,\tilde{\rA}$ be two rectangular annuli, let $\B$ be a box and let $\s\in \NNx^d$ be a scaling vector. If we define for $j\geq 0$ the scaled sets $\rA_j:=2^{j\s} \rA=\{2^{j\s}x\,\vert\,x\in \rA\},\,\tilde{\rA}_j:=2^{j\s} \tilde{\rA}$ and $\B_j:=2^{j\s }\B$ (with a \emph{matrix} $2^{j\s}$ as in \eqref{eq:as}), we have the following relations:
\begin{itemize}
\item $\rA_i\cap \B_j\neq \varnothing$ only if $i\lsim j$,
\item $\rA_i\cap \tilde{\rA}_j\neq \varnothing$ only if $i\sim j$,
\end{itemize}
where $\lsim$ and $\sim$ should be read as on page \pageref{eq:lsim}. 
\end{lemma}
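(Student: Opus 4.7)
The plan is to unfold the definitions of boxes and annuli in coordinates and then exploit the fact that scaling by $2^{j\s}$ acts coordinate-wise by $x_\ell \mapsto 2^{j\s_\ell} x_\ell$, so that the non-isotropic scaling reduces to $d$ independent one-dimensional estimates.

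First I would fix notation. Write $\B = \bigtimes_\ell [-b_\ell, b_\ell]$ for the given box, and for the annuli write $\rA = \overline{\tilde{\B}\setminus \B'}$ with inner box $\B' = \bigtimes_\ell[-a_\ell, a_\ell]$ and outer box $\tilde{\B} = \bigtimes_\ell[-\tilde{a}_\ell, \tilde{a}_\ell]$ satisfying $0 < a_\ell < \tilde{a}_\ell$ (the condition $\partial\B' \cap \partial \tilde{\B} = \varnothing$ is what forces these strict inequalities, and this is the only place the hypothesis enters). Similarly write $\tilde{\rA}$ in terms of half-widths $a'_\ell < \tilde{a}'_\ell$. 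The decisive structural fact I would then record is: every $x \in \rA$ satisfies $|x_\ell| \le \tilde{a}_\ell$ for all $\ell$, and there \emph{exists} some index $k = k(x)$ with $|x_k| \ge a_k > 0$. Applying $2^{i\s}$, the same holds for $\rA_i$ with $a_\ell, \tilde{a}_\ell$ replaced by $2^{i\s_\ell} a_\ell, 2^{i\s_\ell}\tilde{a}_\ell$.

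For the first bullet, suppose $y \in \rA_i \cap \B_j$. Pick the coordinate $k$ provided by membership in $\rA_i$, so
\[
2^{i\s_k} a_k \;\le\; |y_k| \;\le\; 2^{j\s_k} b_k.
\]
Since $\s_k \ge 1$, this yields $i - j \le \s_k^{-1}\log_2(b_k/a_k) \le \log_2(\max_\ell b_\ell/a_\ell)$, a deterministic constant depending only on $\rA$ and $\B$. This is precisely $i \lsim j$ in the sense of \eqref{eq:lsim}.

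For the second bullet I would run the same argument twice. If $y \in \rA_i \cap \tilde{\rA}_j$, then from the lower bound coming from $\rA_i$ and the upper bound coming from $\tilde{\rA}_j$ one gets a coordinate $k$ with $2^{i\s_k} a_k \le |y_k| \le 2^{j\s_k}\tilde{a}'_k$, hence $i \le j + N_1$; reversing the roles of $\rA$ and $\tilde{\rA}$ (using the lower bound from $\tilde{\rA}_j$ and the upper bound from $\rA_i$) gives $j \le i + N_2$, so $i \sim j$.

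There is no substantive obstacle: the proof is a one-line coordinate inspection once the structural fact about $\rA$ is extracted. The only technical care is in handling the mixed quantifier structure (\textbf{some} coordinate realises the lower bound, while \textbf{every} coordinate realises the upper bound), and in checking that the strict separation of the inner and outer boundaries in the definition of a rectangular annulus is what produces the positive constants $a_\ell$ driving the whole estimate.
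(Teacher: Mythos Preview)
Your proof is correct and follows essentially the same idea as the paper's: the paper phrases the first bullet via the rescaling $\rA_i\cap\B_j\neq\varnothing \Leftrightarrow \rA\cap 2^{(j-i)\s}\B\neq\varnothing$ and then observes that for $i-j$ large the shrunken box sits inside the inner box of $\rA$, while you make this explicit coordinate by coordinate; for the second bullet both you and the paper reduce to the first by using that each annulus is contained in a box. One cosmetic point: your final simplification $\s_k^{-1}\log_2(b_k/a_k)\le \log_2(\max_\ell b_\ell/a_\ell)$ can fail when $b_k<a_k$, but this is irrelevant since $i-j\le \max_k \s_k^{-1}\log_2(b_k/a_k)$ is already a deterministic bound and that is all $i\lsim j$ requires.
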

\begin{proof}
For the first statement we can write $2^{i\s} \rA \cap 2^{j\s}\B\neq \varnothing$ as $\rA\cap 2^{(j-i)\s}\B\neq \varnothing$ and use that for $i\gsim j$ this cannot be true by definition of a rectangular annulus. The second statement then follows from the first one if we use that $\rA$ and $\tilde{\rA}$ are each contained in some box. 
\end{proof}

We now construct an \emph{anisotropic dyadic partition of unity} for wich we essentially follow \cite[Section 5.1]{TriebelIII}. Fix in the following some scaling vector $\s\in \NNx^d$. \footnote{To be precise, \cite{TriebelIII} works with $\s\in (0,\infty)^d,\,\sum_{i=1}^d \s_i=d$ instead, which allows for an interpretation of the Besov regularity in Definition \ref{def:Besov} below as a sort of \emph{mean} regularity. We here follow the scaling that corresponds to the definitions in \cite{RegularityStructures}.}

Let $\B_{-2}:=\varnothing$ and set $\B_j:=2^{(j+1)\s}[-1,1]^d$ for $j\geq -1$. Fix further a symmetric and positive $\varphi_{-1} \in \Dww(\RR^d)$ with values in $[0,1]$ such that $\varphi_{-1}=1$ on an open set containing $\B_{-1}=[-1,1]^d$ and $\supp \varphi_{-1}\subseteq \B_0=2^{\s}[-1,1]^d$. We then set for $j\geq 0$ \glsadd{varphij} \label{DyadicPartitionOfUnity}
\begin{align*}
\varphi_{j}=\varphi_{-1}(2^{-(j+1)\s}\cdot)-\varphi_{-1}(2^{-j\s}\cdot)\,,
\end{align*}
(with \emph{matrices} $2^{-j\s},\,2^{-(j+1)\s}$ as in \eqref{eq:as}). This yields a family $(\varphi_j)_{j\geq -1}\in \Dww(\RR^d)$ that satisfies the following properties:
\begin{itemize}
\item $\varphi_j(x)\geq 0$ for $j\geq -1$ and $x\in \RR^d$.
\item $\sum_{j \geq -1} \varphi_j(x) =1$ for $x\in \RR^d$.
\item $\supp\,\varphi_{j}\subseteq \B_{j+1}\backslash \B_{j-1}$ for $j\geq -1$, in particular $\supp\,\varphi_j \cap \supp \varphi_{j'}=\varnothing$ for $|j-j'|>1$,
\end{itemize}
and that further exhibits the following scaling behavior for $j\geq 0$
\begin{align}
\label{eq:Scalingphij}
\varphi_j &=\varphi_0(2^{-j\s}\cdot)\,,\\
\label{eq:ScalingSum}
\sum_{i<j} \varphi_i &=\varphi_{-1}(2^{-j\s}\cdot)\,.
\end{align}
The family $(\varphi_j)_{j\geq -1}$ is called an \emph{(anisotropic) dyadic partition of unity}. 
Note that for $j\geq 0$ the support of $\varphi_j$ is contained in a rectangular annulus of the form $2^j \rA$ as in Definition~\ref{def:RectangularAnnulus} (namely the set $\overline{\B_{j+1}\backslash \B_{j-1}}$). The support of $\varphi_{-1}$ is contained in the box $\B_0$.

\begin{remark}
\label{rem:OtherPartitionOfUnity}
The choice of the sequence $\B_j$ was of course rather arbitrary. One could for example have chosen another sequence of boxes such as $\tilde{\B}_j=a\cdot\B_j$ with some $a>0$ instead, which would lead to a different partition of unity $(\tilde{\varphi}_j)_{j\geq -1}$. 
\end{remark}

The following functions will play a special role in this paper: \glsadd{kk} \glsadd{KK}
\begin{align}
\label{eq:Psij}
\Psi^j:=\FF^{-1}\varphi_j\,, \qquad \Psi^{<j}=\sum_{-1\leq i<j} \Psi^i=\FF^{-1} \left(\varphi_{-1}(2^{-j\s}\cdot )\right)
\end{align}
with $j\geq -1$ (note that $\Psi^{<-1}=0$). We also use occasionally the notation $\Psi^{\leq j}:=\Psi^{<j+1}$ for $j\geq -1$.


\begin{lemma}[Scaling]
\label{lem:PsiScal}
There are real-valued, symmetric Schwartz functions $\phi_1,\,\phi_2\in \Sw(\RR^d)$ such that for $j\geq 0$
\begin{align*}
\kk=2^{j|\s|}\,\phi_1(2^{j\s}\cdot)\mbox{\qquad and \qquad} \KK= 2^{j|\s|} \,\phi_2(2^{j\s}\cdot)\,,
\end{align*}
where the \emph{matrix} $2^{j\s}$ should be read as in \eqref{eq:as}. In particular we have for $k\in \NN^d$
\begin{align*}
\|\partial^k\Psi^j\|_{L^1(\RR^d)},\,\|\partial^k \Psi^{<j}\|_{L^1(\RR^d)}\lesssim 2^{j|k|_\s}\,.
\end{align*}
\end{lemma}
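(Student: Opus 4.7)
The plan is to establish both identities by a direct computation using the anisotropic dyadic scaling of the partition of unity, then read off the $L^1$ bounds from a change of variables.

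First I would set $\phi_1 := \FF^{-1} \varphi_0$ and $\phi_2 := \FF^{-1} \varphi_{-1}$. Since $\varphi_0$ and $\varphi_{-1}$ belong to $C^\infty_c(\RR^d)$ and are real-valued and symmetric (symmetry of $\varphi_0$ follows from that of $\varphi_{-1}$ together with the fact that $2^{-\s}$ is a diagonal matrix with positive entries, which preserves the symmetry $x\mapsto -x$), both $\phi_1$ and $\phi_2$ are real-valued symmetric Schwartz functions.

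Next I would use the relations \eqref{eq:Scalingphij} and \eqref{eq:ScalingSum}: for $j\geq 0$
\[
\varphi_j = \varphi_0(2^{-j\s}\,\cdot\,), \qquad \sum_{i<j}\varphi_i = \varphi_{-1}(2^{-j\s}\,\cdot\,).
\]
Applying $\FF^{-1}$ together with the standard change-of-variables rule for the inverse Fourier transform under a diagonal linear substitution (with $|\det 2^{-j\s}| = 2^{-j|\s|}$), I obtain
\[
\kk(x) = 2^{j|\s|}\,\phi_1(2^{j\s} x), \qquad \KK(x) = 2^{j|\s|}\,\phi_2(2^{j\s} x),
\]
which gives the first claim.

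For the $L^1$ estimate on derivatives, I differentiate the scaled expressions: because $2^{j\s}$ is diagonal with entries $2^{j\s_i}$, for any multi-index $k\in \NN^d$
\[
\partial^k\kk(x) = 2^{j|\s|}\,2^{j|k|_\s}\,(\partial^k\phi_1)(2^{j\s}x),
\]
and similarly for $\KK$ with $\phi_2$ in place of $\phi_1$. A change of variables $y = 2^{j\s}x$ introduces a factor $2^{-j|\s|}$ which cancels the overall prefactor, leaving
\[
\|\partial^k\kk\|_{L^1(\RR^d)} = 2^{j|k|_\s}\,\|\partial^k\phi_1\|_{L^1(\RR^d)},
\]
and the analogous identity for $\KK$. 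The constants $\|\partial^k\phi_{1}\|_{L^1}$, $\|\partial^k\phi_{2}\|_{L^1}$ are finite since $\phi_1,\phi_2$ are Schwartz, yielding the stated bound. There is no real obstacle in this lemma; the only small point to keep track of is the distinction between $L^1$-scaling (which absorbs one factor $2^{-j|\s|}$) and the pointwise factor $2^{j|\s|}$ coming from the inverse Fourier transform of a dilated function, and the fact that the anisotropic differentiation produces exactly the weight $2^{j|k|_\s}$ with $|k|_\s = \sum_i k_i\s_i$.
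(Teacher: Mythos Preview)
Your proof is correct and follows exactly the same approach as the paper: it defines $\phi_1=\FF^{-1}\varphi_0$, $\phi_2=\FF^{-1}\varphi_{-1}$, deduces the scaling from \eqref{eq:Scalingphij} and \eqref{eq:ScalingSum}, and notes that symmetry and real-valuedness of $\varphi_{-1},\varphi_0$ transfer to $\phi_1,\phi_2$. The paper leaves the $L^1$ bound implicit, so your explicit change-of-variables computation is a welcome elaboration; the only thing to watch is the index shift between $\Psi^{<j}$ and $\KK=\Psi^{<j-1}$ in the statement, which amounts to replacing $j$ by $j-1$ in your second identity (and requires $j\ge 1$ there), but this does not affect the argument or the $L^1$ estimate.
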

\begin{proof}
The scaling property follows from \eqref{eq:Scalingphij} and \eqref{eq:ScalingSum} via $\phi_1:=\FF^{-1}\varphi_0$ and $\phi_2:=\FF^{-1} \varphi_{-1}$. Since $\varphi_0$ and $\varphi_{-1}$ are symmetric and real-valued, also their Fourier transforms $\phi_1$ and $\phi_2$ are symmetric and real-valued.
\end{proof}

The action of $\Psi^{<j}$ on polynomials can be described as follows. 
\begin{lemma}[Interplay with polynomials]
\label{lem:PsiPol}
For $k,\,l\in \NN^d$ and $j\geq 0$ we have
\begin{align*}
\intr \dd u \,\partial^k \Psi^{j}_{u} \,u^l=0 
\mbox{\qquad and\qquad } \intr \dd u \,\partial^k \Psi^{<j}_{u} \,u^l=\delta_{k l}\,k! \,.
\end{align*}
\end{lemma}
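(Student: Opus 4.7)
The approach is to turn both integrals into evaluations of derivatives of Fourier transforms at the origin, using the standard identity
\[
\intr \dd u\, g(u)\, u^l = (-2\pi \ii)^{-|l|} \partial^l \FF g (0) \quad \text{for } g \in \Sw(\RR^d),
\]
which is just Leibniz differentiation under the integral defining $\FF g$. Applying this with $g = \partial^k \Psi^j$ (resp.\ $g = \partial^k \Psi^{<j}$) reduces both identities to computing derivatives at $0$ of $(2\pi \ii \xi)^k \varphi_j(\xi)$ and $(2\pi \ii \xi)^k \varphi_{-1}(2^{-j\s}\xi)$, respectively. That these manipulations are legitimate (integration of $g \cdot u^l$ converges absolutely, derivatives pass through the integral) is guaranteed by Lemma~\ref{lem:PsiScal}, which shows $\Psi^j, \Psi^{<j} \in \Sw(\RR^d)$.

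For the first identity I would invoke the support property of the dyadic partition of unity: for $j \geq 0$, $\supp \varphi_j \subseteq \B_{j+1} \backslash \B_{j-1}$, and in particular $0 \notin \supp \varphi_j$. Hence $\varphi_j$ vanishes in an open neighbourhood of the origin, so that $(2\pi \ii \xi)^k \varphi_j(\xi)$ is identically zero near $\xi = 0$. All its partial derivatives at the origin vanish, and the first claim follows.

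For the second identity I would exploit the complementary fact: since $\varphi_{-1}$ equals $1$ on an open set containing $\B_{-1} = [-1,1]^d$, the rescaled function $\varphi_{-1}(2^{-j\s} \cdot)$ equals $1$ on an open set containing $0$ for every $j \geq 0$. Thus locally near the origin,
\[
\FF(\partial^k \Psi^{<j})(\xi) = (2\pi \ii \xi)^k \varphi_{-1}(2^{-j\s}\xi) = (2\pi \ii)^{|k|} \xi^k ,
\]
and a direct computation gives $\partial^l_\xi \xi^k\big|_{\xi = 0} = k!\, \delta_{kl}$ (nonzero only when both multi-indices agree coordinatewise). Plugging the resulting $(2\pi \ii)^{|k|} k!\, \delta_{kl}$ into the Fourier formula from the opening paragraph yields $\delta_{kl}\, k!$ after the factors of $2\pi \ii$ collapse.

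There is no genuine obstacle here; the lemma is essentially a dictionary entry translating the spectral support of $\Psi^j$ and $\Psi^{<j}$ into moment conditions. The only small point of care is to keep track of the integration-by-parts-free Fourier route (as above) rather than first integrating $\partial^k$ onto $u^l$, which would split the argument into separate cases depending on whether $l \geq k$ coordinatewise and obscure the underlying symmetry.
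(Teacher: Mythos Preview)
Your proof is correct and follows the same Fourier-analytic route as the paper: both arguments reduce the integrals to evaluating (derivatives of) $\varphi_j$ and $\varphi_{-1}(2^{-j\s}\cdot)$ at the origin and then invoke the support/constancy properties of the partition of unity there. The only cosmetic difference is that the paper first integrates $\partial^k$ by parts onto $u^l$ (after a symmetry substitution $u\mapsto -u$) before passing to Fourier space, whereas you absorb both the monomial $u^l$ and the derivative $\partial^k$ into the Fourier transform in one step---as you yourself remark, this avoids the case distinction on whether $k\le l$ coordinatewise.
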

\begin{proof}
Note that we can replace $\Psi^{j}_{u}$ and $\Psi^{<j}_{u}$ in the expressions above by $\Psi^{j}_{-u}$ and $\Psi^{<j}_{-u}$ due to symmetry. We consider the right equality first. By integration by parts the left hand side of this relation equals $\binom{l}{k} k! \intr\dd u\,\Psi^{<j}_{-u}\,u^{l-k}$ (or $0$ if $l>k$). This expression can then be rewritten via the inverse Fourier transform as 
\begin{align*}
\uu_{l\leq k} \binom{l}{k} k!\,(2\pi \ii)^{k-l}\partial^{l-k} \FF^{-1}\Psi^{<j}_{-\cdot}(0)\,,
\end{align*}
which yields the claim since $\FF^{-1} \Psi^{<j}$ equals $1$ in a box around $0$. The left relation in the claim is shown in the same way by using $\FF^{-1}\Psi^{j}_{-\cdot}(0)=\varphi_j(0)=0$ instead.
\end{proof}

\begin{lemma}[Interplay with polynomially growing functions]
\label{lem:PsiPolScal}
If for some $a\geq 0$ and some measurable $f:\RR^d\rightarrow \mathbb{C}$ we have $|f_x| \lesssim \|x\|_\s^a$, then this implies for $k\in \NN^d$
\begin{align*}
\left|\intr \dd u \, \partial^k\kk_u \cdot f_u \right| + \left|\intr\, \dd u \, \partial^k\Psi^{<j}_u \cdot f_u \right|\lesssim 2^{-j(a-|k|_\s)}\,.
\end{align*}
\end{lemma}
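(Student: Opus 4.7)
The proof is essentially a change of variables exploiting the explicit scaling from Lemma~\ref{lem:PsiScal}, so the plan is short. First I would take advantage of the fact that differentiation commutes with the scaling: from $\Psi^j = 2^{j|\s|}\phi_1(2^{j\s}\cdot)$, a simple chain rule computation gives
\[
\partial^k \Psi^j(u) \;=\; 2^{j(|\s|+|k|_\s)}\,(\partial^k \phi_1)(2^{j\s} u),
\]
and similarly for $\Psi^{<j}$ with $\phi_2$ in place of $\phi_1$. (Here one uses that applying a partial derivative $\partial_i$ to $\phi_1(2^{j\s}\cdot)$ pulls out a factor $2^{j\s_i}$, hence applying $\partial^k$ pulls out the factor $2^{j|k|_\s}$.)

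Next I would change variables, setting $v = 2^{j\s}u$, so that $\dd u = 2^{-j|\s|}\dd v$ and, using the anisotropic homogeneity of $\|\cdot\|_\s$ (namely $\|2^{-j\s}v\|_\s = 2^{-j}\|v\|_\s$, which follows directly from the definition \eqref{eq:AnisotropicDistance}), the hypothesis $|f_u|\lesssim \|u\|_\s^a$ becomes $|f(2^{-j\s}v)|\lesssim 2^{-ja}\|v\|_\s^a$. This turns
\[
\intr \dd u\, \partial^k \Psi^j_u\, f_u \;=\; 2^{j|k|_\s}\intr \dd v\,(\partial^k\phi_1)(v)\, f(2^{-j\s}v),
\]
and taking absolute values inside yields the bound
\[
\left|\intr \dd u\, \partial^k \Psi^j_u\, f_u\right| \;\lesssim\; 2^{-j(a-|k|_\s)}\, \intr \dd v\,\bigl|(\partial^k\phi_1)(v)\bigr|\,\|v\|_\s^a.
\]
The remaining integral is a deterministic finite constant, since $\phi_1\in\Sw(\RR^d)$ guarantees that $|\partial^k\phi_1(v)|\,\|v\|_\s^a$ is integrable (any polynomial growth is dominated by the Schwartz decay). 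The identical argument with $\phi_2$ handles the $\Psi^{<j}$ term, and summing the two bounds gives the claim.

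There is essentially no obstacle here: the only thing to verify carefully is the homogeneity $\|\lambda^{-\s}v\|_\s = \lambda^{-1}\|v\|_\s$, which is immediate from \eqref{eq:AnisotropicDistance} because $|\lambda^{-\s_i}v_i|^{1/\s_i} = \lambda^{-1}|v_i|^{1/\s_i}$, and the integrability of $|\partial^k\phi_1|\cdot\|\cdot\|_\s^a$ at infinity, which follows from the Schwartz property. No assumption $a \geq 0$ is actually needed for integrability near infinity; it is only used (mildly) near the origin since $\|v\|_\s^a$ might be singular there otherwise, but here $v$ is integrated against a smooth function so this is harmless.
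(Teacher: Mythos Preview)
Your proof is correct and is exactly the argument the paper has in mind: the paper's proof consists of the single sentence ``This follows immediately from Lemma~\ref{lem:PsiScal}'', and your write-up simply unpacks that scaling computation and change of variables in full detail.
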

\begin{proof}
This follows immediately from Lemma \ref{lem:PsiScal}.
\end{proof}

Let $X$ be a Banach space. Using the functions $\varphi_j$ or their inverse Fourier transforms $\kk=\FF^{-1}\varphi_j$ we define \emph{Littlewood-Paley blocks} for $X$-valued distributions $f\in \Sww'(\RR^d;X)$ and $j\geq -1$ by
\glsadd{varDeltaj} 
\begin{align}
\varDelta_j f=\FF^{-1}( \varphi_j \cdot \FF f)=\Psi^j\ast f=\intr\dd u\, \kk_{\cdot-u} \,f_u\,,
\label{eq:LittlewoodPaleyBlock}
\end{align}
where multiplication and convolution are defined as in the beginning of this subsection and where we used formal notation in the integral on the right hand side. Note that $\varDelta_j f$ is in $C^\infty(\RR^d;X)$ as the inverse transform of a compactly supported distribution. We can now decompose any $f\in \Sww'(\RR^d;X)$ by its \emph{Littlewood-Paley decomposition}:
\begin{align}
f=\sum_{j\geq -1} \varDelta_j f \,,
\label{eq:LittlewoodPaleyDecomposition}
\end{align}
where the sum on the right hand side converges in the topology of $\Sw'(\RR^d;X)$. Note that we have for $p\in [1,\infty]$
\begin{align}
\label{eq:LpLp}
\|\varDelta_j f\|_{L^p(\RR^d;X)} =\|\kk\ast f\|_{L^p(\RR^d;X)}\lesssim \|f\|_{L^p(\RR^d);X)}\,,
\end{align}
where the involved constant can be chosen independently of $j$. Indeed, Young's convolution inequality (which still holds if one of the factors is $X$-valued) implies for $f\in L^p(\RR^d;X)$
\begin{align*}
\|\varDelta_j f\|_{L^p(\RR^d;X)} &=\|\kk\ast f\|_{L^p(\RR^d;X)} \leq \|\kk\|_{L^1(\RR^d)} \|f\|_{L^p(\RR^d;X)} \oset{Lemma \ref{lem:PsiScal}}{\lesssim} \|f\|_{L^p(\RR^d;X)}\,,
\end{align*}
where we used that due to Lemma \ref{lem:PsiScal} we have $\|\kk\|_{L^1(\RR^d)}\|=\|\phi_1\|_{L^1(\RR^d)}\lesssim 1$ for $j>-1$.
The same argument shows that 
\begin{align*}
S_{j} f:=\sum_{-1\leq i<j} \Delta_i f=\Psi^{<j}\ast f=\intr \dd u \,\Psi^{<j}_{\cdot -u} \,f_u\,,
\end{align*}
is also a bounded operator from $L^p(\RR^d;X)$ to itself for $p\in [1,\infty]$. 
Using the decomposition \eqref{eq:LittlewoodPaleyDecomposition} we can now define anisotropic, $X$-valued Besov spaces.
\begin{definition}
\label{def:Besov}
Let $\gamma\in \RR$, let $X$ be a Banach space and let $\s\in \NNx^d$ be a scaling vector. Let further $(\varphi_j)_{j\geq -1}$ be a dyadic partition of unity on $\RR^d$ defined as above and constructed with $\s$. The \emph{anisotropic Besov space} $\Bs^\gamma(\RR^d;X)$ is given by 
\begin{align*}
	\Bs^\gamma(\RR^d;X):=\left\{f\in \Sw'(\RR^d;X)\,\middle\vert \, \|f\|_{\Bs^\gamma(\RR^d;X)}<\infty\right\}\,,
\end{align*}
where 
\begin{align}
\label{eq:BesovNorm}
\|f\|_{\Bs^{\gamma}(\RR^d;X)}:=\left\| \left( 2^{j\gamma } \|\Delta_j f\|_{L^\infty(\RR^d;X)}\right)_{j\geq -1} \right\|_{\ell^\infty}\,.
\end{align}
with the Littlewood-Paley blocks $(\varDelta_j)_{j\geq -1}$ defined via $(\varphi_j)_{j\geq -1}$ as in \eqref{eq:LittlewoodPaleyBlock}.  
\end{definition}

\begin{remark}
\label{rem:BesovGeneralIntegrability}
 The norm \eqref{eq:BesovNorm} can also be defined with general $L^p(\RR^d;X)$ and $\ell^q$ norms for $p,\,q\in [1,\infty]$ (compare \cite[Section~2.7]{Bahouri}) which gives rise to a more general space $\mathcal{B}^\gamma_{p,q,\s}(\RR^d;X)$ that also accounts for the integrability of the considered objects. For the sake of simplicity we will only consider the case $p=q=\infty$ here.    
\end{remark}
\begin{remark}
Using Lemma \ref{lem:FinitelyManyAnnuli} one sees that another choice of dyadic partition of unity $(\tilde{\varphi}_j)_{j\geq -1}\subseteq \Dww(\RR^d)$ instead of $(\varphi_j)_{j\geq -1}$, as in Remark \ref{rem:OtherPartitionOfUnity}, gives an equivalent norm for $\Bs^\gamma(\RR^d;X)$.
\end{remark}

We have the following straightforward modification of \cite[Lemmas~2.69 and~2.84]{Bahouri}, see also \cite[Lemma A.3]{GIP}.
\begin{lemma}
\label{lem:SpectralChunks}
Given a sequence of smooth $(f_j)_{j\geq -1}\in C^\infty(\RR^d;X)$ such that $\supp \FF f_j\subseteq 2^{j\s} \B $ for some (fixed) box $\B$, we have for $\gamma>0$ and $f:=\sum_{j\geq -1} f_j$
\begin{align}
\label{eq:SpectralChunks}
\|f\|_{\Bs^\gamma(\RR^d;X)} \lesssim \left\| \left(2^{j\gamma } \|f_j\|_{L^p(\RR^d;X)}\right)_{j\geq -1} \right\|_{\ell^q}\,.
\end{align}
If $\supp \FF f_j\subseteq 2^{j\s} \rA $ for a rectangular annulus $\rA$, then \eqref{eq:SpectralChunks} is even true for all $\gamma\in \RR$.
\end{lemma}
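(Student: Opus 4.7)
The plan is to reduce the Besov norm of $f = \sum_j f_j$ to a sum over those $j$ that can possibly contribute to a given Littlewood--Paley block $\varDelta_k f$, using Lemma \ref{lem:FinitelyManyAnnuli} to identify the effective index range, and then to absorb the resulting sum either by summability for $\gamma>0$ (box case) or by finiteness of the index set (annulus case).

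First I would write, for each fixed $k \geq -1$,
\[
\varDelta_k f = \sum_{j \geq -1} \Psi^k \ast f_j,
\]
and use the spectral support of $\Psi^k \ast f_j$, which lies in $\supp \varphi_k \cap \supp \FF f_j$. In the annulus case $\supp \FF f_j \subseteq 2^{j\s}\rA$, so by Lemma \ref{lem:FinitelyManyAnnuli} this intersection is empty unless $j \sim k$; in the box case $\supp \FF f_j \subseteq 2^{j\s}\B$, so the intersection is empty unless $j \gsim k$. In either case the only $f_j$ contributing are those whose index is comparable to, or at least not much smaller than, $k$.

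Next, for the nonvanishing terms I would invoke Young's convolution inequality together with Lemma~\ref{lem:PsiScal} to obtain
\[
\|\Psi^k \ast f_j\|_{L^p(\RR^d;X)} \leq \|\Psi^k\|_{L^1(\RR^d)} \|f_j\|_{L^p(\RR^d;X)} \lesssim \|f_j\|_{L^p(\RR^d;X)},
\]
uniformly in $k$. Multiplying by $2^{k\gamma}$ and inserting $2^{-j\gamma} \cdot 2^{j\gamma}$ gives
\[
2^{k\gamma} \|\varDelta_k f\|_{L^p(\RR^d;X)} \lesssim \sum_{j : j \gsim k} 2^{(k-j)\gamma}\, 2^{j\gamma} \|f_j\|_{L^p(\RR^d;X)}.
\]
In the annulus case the range $j \sim k$ contains only a bounded number of indices, so the right-hand side is trivially dominated by the $\ell^q$ norm of the sequence $(2^{j\gamma}\|f_j\|_{L^p})_j$, with no sign restriction on $\gamma$. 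In the box case the sum extends to all $j \gsim k$, and here one needs $\gamma > 0$ so that $\sum_{j \gsim k} 2^{(k-j)\gamma} < \infty$; the bound then follows from a discrete Young-type convolution estimate (or from Hölder in the index $j$) and yields the $\ell^q$ control on the right-hand side of \eqref{eq:SpectralChunks}.

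Finally, taking the $\ell^\infty$ norm in $k$ and using that the outer norm in the definition of $\|\cdot\|_{\Bs^\gamma}$ (Definition \ref{def:Besov}) is $\ell^\infty$ of $L^\infty$, the estimate descends to the Besov norm of $f$. The main technical point is the box case: one must ensure that $\gamma > 0$ is used at exactly the right spot to sum the geometric tail $\sum_{j \gsim k} 2^{(k-j)\gamma}$, and to verify that Lemma \ref{lem:FinitelyManyAnnuli} indeed forces $j \gsim k$ (rather than $j \sim k$) in the box setting, which is what obstructs the estimate for $\gamma \leq 0$ there.
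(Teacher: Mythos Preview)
The paper does not give its own proof of this lemma; it simply cites \cite[Lemmas~2.69 and~2.84]{Bahouri} and \cite[Lemma~A.3]{GIP} and calls the result a ``straightforward modification'' thereof. Your argument is precisely the standard proof found in those references: restrict the sum $\varDelta_k f = \sum_j \Psi^k \ast f_j$ via the spectral overlap conditions of Lemma~\ref{lem:FinitelyManyAnnuli}, bound each surviving term by Young's inequality together with Lemma~\ref{lem:PsiScal}, and then sum the geometric tail in $j$ (which is exactly where $\gamma>0$ is needed in the box case, while the annulus case leaves only boundedly many terms and hence works for all $\gamma$). So your approach matches the intended one.

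One small remark: the lemma as stated in the paper carries $L^p$ and $\ell^q$ on the right-hand side, while the Besov norm $\|\cdot\|_{\Bs^\gamma}$ of Definition~\ref{def:Besov} is built from $L^\infty$ and $\ell^\infty$. Your intermediate estimate bounds $\|\varDelta_k f\|_{L^p}$ and then in the last paragraph you pass to the $\ell^\infty$-of-$L^\infty$ norm; strictly speaking this step only goes through for $p=\infty$ (any $q$ is fine since $\ell^q \hookrightarrow \ell^\infty$). In light of Remark~\ref{rem:BesovGeneralIntegrability} this is almost certainly a notational slip in the statement rather than a flaw in your reasoning---just take $p=\infty$ throughout and the argument is clean.
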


An intuition behind the anisotropic scaling is that $f\in \Bs^\gamma(\RR^d;X)$ has ``regularity $\alpha / \s_i$ in direction $i\in \{1,\ldots,d\}$''\footnote{Although this intuition is helpful to ``guess'' $\s$ in many situations it is actually slightly incorrect, since the parameter $\gamma$ should really be read in the sense of an average.  A more appropriate (but rather useless) intuition for $\Bs^\gamma$ would be that $f\in \Bs^\gamma$ has in average a regularity of $d\cdot \gamma/|\s|$. Compare the regularity of white noise \cite[Lemma~10.2]{RegularityStructures} as an example where the ``directional intuition'' evidently fails.}.
To strenghten this intuition we will find a different characterization of the Besov spaces $\Bs^\gamma$ based on the Taylor remainder for $\gamma>0$
\glsadd{Rgammaxh}
\begin{align}
 R^\gamma_{x;h}f :=f-T^\gamma_{x;h}f \,, \label{eq:TaylorRemainder}
\end{align}
where 
\glsadd{Tgammaxh}
\begin{align*}
T^\gamma_{x;h}f :=\sum_{k\in \NN^d_{<\gamma}} \frac{1}{k!} \partial^k f(x)\,h^k
\end{align*}
for $x,h\in \RR^d,\,\gamma>0,\,\mathbb{N}^d_{<\gamma}:=\{k\in \mathbb{N}^d\,\vert\,|k|_\s<\gamma\}$ and with $f$ having enough derivatives such that these expressions make sense. $R^\gamma_{x;h} f$ can be rewritten by an application of Proposition~11.1 of \cite{RegularityStructures}.
\glsadd{NNdgammal}

\begin{lemma}
\label{lem:Taylor}
Let $X$ be a Banach space. Let further $\gamma\in (0,\infty)\backslash \NN$ and let $f\in C^\infty(\RR^d;X)$. We then have for $x,h\in \RR^d$ 
\begin{align}
\label{eq:Taylor}
&R^\gamma_{x;h} f= \sum_{k\in \mathbb{N}^d_{> \gamma}} R^{\gamma,k}_{x;h} f=\sum_{k\in \mathbb{N}^d_{> \gamma}} \frac{h^k}{(k-e_{\mathfrak{m}(k)})!}   \int_0^1 \dd t \,\partial^k f(x+v_t^k(h))\, (1-t)^{k_{\mathfrak{m}(k)}-1}   \,,
\end{align}
where $\mathfrak{m}(k)=\min\{j\,  \vert \,k_j\neq 0\}$, $\NN^d_{> \gamma}:=\{k\in \NN^d\,\vert\, |k|_\s > \gamma,\,|k-e_{\mathfrak{m}(k)}|_\s<\gamma\}$ and
\begin{align*}
v_t^k(h)=(h_1,\ldots,h_{\mathfrak{m}(k)-1},t\cdot h_{\mathfrak{m}(k)},0,\ldots,0)\,,
\end{align*}
for the canonical basis $(e_1, \dots, e_d)$ of $\RR^d$.
\glsadd{NNdgammag}
\end{lemma}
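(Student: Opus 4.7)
The plan is to prove the identity by induction on the spatial dimension $d$. The base case $d = 1$ is exactly the classical one-dimensional Taylor formula with integral remainder, applied at order $N = \min\{k \in \NN \,:\, k \s_1 > \gamma\}$. Such an $N$ exists with $N \geq 1$ because $\gamma > 0$ and $\s_1 \in \NNx$, while $\gamma \notin \NN$ forces $(N-1)\s_1 < \gamma$ strictly, so that $\NN^1_{<\gamma} = \{0, \dots, N-1\}$ and $\NN^1_{>\gamma} = \{N\}$, making the right-hand side of \eqref{eq:Taylor} collapse to the usual integral remainder formula.

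For the inductive step in dimension $d$, I would first apply the one-dimensional Taylor formula with integral remainder in the last variable, namely to the map $t \mapsto f(x_1 + h_1, \dots, x_{d-1} + h_{d-1}, x_d + t h_d)$, expanded around $t = 0$ at order $N_d = \min\{k \in \NN \,:\, k \s_d > \gamma\}$. The resulting integral remainder is precisely the $k = (0,\dots,0,N_d)$ term of \eqref{eq:Taylor} with $\mathfrak{m}(k) = d$. For each polynomial term indexed by $k_d \in \{0, \dots, N_d - 1\}$ produced by this expansion, I would then apply the induction hypothesis to the smooth function $(x_1, \dots, x_{d-1}) \mapsto \partial_d^{k_d} f(x_1, \dots, x_{d-1}, x_d)$ with scaling $\s' = (\s_1, \dots, \s_{d-1})$ and reduced parameter $\gamma' = \gamma - k_d \s_d$; this $\gamma'$ is strictly positive since $k_d \s_d \leq (N_d - 1)\s_d < \gamma$, and it is not in $\NN$ since $\gamma \notin \NN$ and $k_d \s_d \in \NN$.

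It then remains to combine the resulting expansions by a bookkeeping argument. The multi-indices $k = (l, k_d)$ with $l \in \NN^{d-1}_{<\gamma'}$ and $k_d \in \{0, \dots, N_d - 1\}$ traverse exactly $\NN^d_{<\gamma}$, and the coefficients multiply to $\frac{h^k}{k!}$, reproducing $T^\gamma_{x;h} f$. Those $k = (l, k_d)$ with $l \in \NN^{d-1}_{>\gamma'}$ and $k_d < N_d$ traverse exactly the $k \in \NN^d_{>\gamma}$ with $\mathfrak{m}(k) < d$; since $l \neq 0$ in that case one has $\mathfrak{m}((l, k_d)) = \mathfrak{m}(l)$, so that the $(d-1)$-dimensional base point $v_t^l(h')$ (padded by a zero to live in $\RR^d$) agrees with $v_t^k(h)$, and $k_d! \cdot (l - e_{\mathfrak{m}(l)})! = (k - e_{\mathfrak{m}(k)})!$. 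Together with the leftover $k = (0,\dots,0,N_d)$ remainder from the first step, this exhausts the sum on the right-hand side of \eqref{eq:Taylor}.

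I expect the only real hurdle to be this last index bookkeeping, i.e.\ ensuring that every multi-index in $\NN^d_{<\gamma}$ and every multi-index in $\NN^d_{>\gamma}$ is produced exactly once by the iteration and that weights, base points and factorials recombine correctly. No analytical difficulty enters; as the hint in the lemma statement already indicates, one may alternatively simply invoke Proposition~11.1 of \cite{RegularityStructures}, where precisely this iterated anisotropic Taylor expansion is carried out.
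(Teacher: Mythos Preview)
Your proposal is correct and in fact \emph{is} the paper's proof: the paper does not give an argument for Lemma~\ref{lem:Taylor} at all, it merely states that the formula ``can be rewritten by an application of Proposition~11.1 of \cite{RegularityStructures}'', and what you have sketched---a one-variable Taylor expansion in the last coordinate followed by an inductive appeal to the $(d-1)$-dimensional statement---is exactly the iterated expansion carried out there. Your index bookkeeping (the bijection between $\NN^d_{<\gamma}$, resp.\ $\NN^d_{>\gamma}$, and the pairs $(l,k_d)$ arising from the iteration, together with the matching of $v_t^k(h)$ and the factorials) is accurate, so nothing is missing.
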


\begin{remark} 
The set $\NN^d_{>\gamma}$ can be thought of as the ``discrete boundary'' of $\NN^d_{<\gamma}$. Note that this set is finite because it only contains $k$ with $|k|_\s - \s_{\mathfrak{m}(k)} < \gamma$.
\end{remark}

The announced characterization of anisotropic Besov spaces is given by the following lemma, which is a modification of \cite[Theorem 2.36]{Bahouri}.

\begin{lemma}
\label{lem:Hoelder}
For $\gamma\in (0,\infty) \backslash\NN$ and a Banach space $X$ an equivalent norm for $\Bs^\gamma(\RR^d;X)$ is given by the anisotropic \emph{Hölder norm}
\begin{align}
\label{eq:HoelderNorm}
\sup_{l\in \NN^d_{<\gamma}}\|\partial^l f\|_{C_b(\RR^d;X)}+\sup_{l\in \NN^d_{<\gamma}}\sup_{x,y\in \RR^d,\,0<\|x-y\|_\s\leq 1} \frac{\|\partial^l f(y)-T^{\gamma-|l|_\s}_{x;y-x}\partial^l f\|_X}{\|y-x\|^{\gamma-|l|_\s}_\s}\,.
\end{align}
where we recall that $\|g\|_{C_b(\RR^d;X)}:=\sup_{x\in \RR^d} \|g(x)\|_{X}$.
\end{lemma}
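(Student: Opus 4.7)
This is the anisotropic, Banach-valued version of the classical identification $B^\gamma_{\infty,\infty}=\mathcal{C}^\gamma$, and I follow the textbook blueprint, proving the two inclusions separately.

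\emph{(i) The Hölder norm controls the Besov norm.} Suppose the right-hand side of \eqref{eq:HoelderNorm} is finite. For $j=-1$, Young's inequality together with Lemma \ref{lem:PsiScal} gives $\|\Delta_{-1}f\|_{L^\infty(\RR^d;X)}\lesssim \|f\|_{C_b(\RR^d;X)}$, which is dominated by the $l=0$ summand of \eqref{eq:HoelderNorm}. For $j\geq 0$ I exploit Lemma \ref{lem:PsiPol}: because $\Psi^j$ annihilates every polynomial I can subtract the Taylor polynomial $T^\gamma_{x;\cdot}f$ under the integral, yielding
\begin{align*}
\Delta_j f(x)\;=\;\int \Psi^j(-h)\,f(x+h)\,dh\;=\;\int \Psi^j(-h)\,R^\gamma_{x;h}f\,dh.
\end{align*}
The pointwise bound $\|R^\gamma_{x;h}f\|_X\lesssim \|h\|_\s^\gamma$ for $\|h\|_\s\leq 1$ (plus a polynomial growth estimate for $\|h\|_\s>1$ coming from $\sup_l \|\partial^l f\|_{C_b}$), combined with the scaling/decay of Lemma \ref{lem:PsiPolScal}, then gives $\|\Delta_j f\|_{L^\infty(\RR^d;X)}\lesssim 2^{-j\gamma}$.

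\emph{(ii) The Besov norm controls the Hölder norm.} Decompose $f=\sum_{j\geq -1}\Delta_j f$ and fix $l\in \NN^d_{<\gamma}$. The sup-norm part is handled via an anisotropic Bernstein inequality, obtained by writing $\Delta_j f=\tilde\Psi^j\ast \Delta_j f$ for some $\tilde\Psi^j$ whose Fourier transform equals $1$ on $\supp \varphi_j$ and applying Lemma \ref{lem:PsiScal}: $\|\partial^l\Delta_j f\|_{L^\infty}\lesssim 2^{j|l|_\s}\|\Delta_j f\|_{L^\infty}\lesssim 2^{j(|l|_\s-\gamma)}\|f\|_{\Bs^\gamma}$, summable since $|l|_\s<\gamma$. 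For the Taylor-remainder seminorm, fix $x,y$ with $\delta:=\|y-x\|_\s\in(0,1]$ and pick $j_0\geq 0$ with $2^{-j_0}\sim \delta$; split the sum over $j$ at $j_0$. On the high-frequency block $j>j_0$ one estimates $\partial^l\Delta_j f(y)$ and $T^{\gamma-|l|_\s}_{x;y-x}\partial^l\Delta_j f$ separately using Bernstein together with $|h^m|\leq \|h\|_\s^{|m|_\s}$; on the low-frequency block $j\leq j_0$ one applies Lemma \ref{lem:Taylor} to the smooth function $\partial^l\Delta_j f$ and bounds the resulting derivatives $\partial^k\Delta_j f$ for $k\in \NN^d_{>\gamma-|l|_\s}$ again by Bernstein. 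Geometric summation shows that both blocks contribute $\lesssim \delta^{\gamma-|l|_\s}\|f\|_{\Bs^\gamma}$.

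\emph{Main obstacle.} The delicate step is the anisotropic Taylor-remainder estimate in (i): Lemma \ref{lem:Taylor} expresses $R^\gamma_{x;h}f$ in terms of derivatives $\partial^k f$ with $|k|_\s>\gamma$, which the Hölder norm \eqref{eq:HoelderNorm} does \emph{not} directly control. The remedy is that for each $k\in \NN^d_{>\gamma}$ the multi-index $l:=k-e_{\m(k)}$ lies in $\NN^d_{<\gamma}$; integrating by parts in the $t$-variable of \eqref{eq:Taylor} rewrites the relevant integral of $\partial^k f$ as a finite difference of $\partial^l f$ along $e_{\m(k)}$, which is then estimated by the Hölder modulus of $\partial^l f$ of exponent $\gamma-|l|_\s$. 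Reassembling and tracking the anisotropic powers so that they add up exactly to $\|h\|_\s^\gamma$ is the heaviest bookkeeping in the argument, but is routine once the reduction is in place.
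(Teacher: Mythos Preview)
Your proof plan is correct and follows essentially the same route as the paper: both directions use the Littlewood--Paley decomposition, Bernstein estimates (via Lemma~\ref{lem:PsiScal} and the identity $\varDelta_j=\varDelta_j\overline{\varDelta}_j$), the polynomial-annihilation of Lemma~\ref{lem:PsiPol}, Taylor expansion via Lemma~\ref{lem:Taylor}, and a frequency split at $2^{-j_0}\sim\|y-x\|_\s$.

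However, your ``main obstacle'' is a phantom. In direction~(i) the bound $\|R^\gamma_{x;h}f\|_X\lesssim\|h\|_\s^\gamma$ is \emph{literally} the $l=0$ summand of the H\"older norm~\eqref{eq:HoelderNorm} (more precisely of its equivalent global form~\eqref{eq:HoelderNormGlobal}): with $l=0$ one has $\gamma-|l|_\s=\gamma$ and $f(y)-T^\gamma_{x;y-x}f=R^\gamma_{x;y-x}f$. No appeal to Lemma~\ref{lem:Taylor}, and hence no derivatives $\partial^k f$ with $|k|_\s>\gamma$, are ever needed in this direction; the paper simply cites the assumed H\"older bound and applies Lemma~\ref{lem:PsiPolScal}. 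Your proposed remedy (integration by parts in $t$ to reduce to finite differences of $\partial^{k-e_{\m(k)}}f$) is unnecessary and, as stated, incomplete: a bare finite difference $\partial^l f(y)-\partial^l f(x)$ is only controlled by the $l$-term of~\eqref{eq:HoelderNorm} when $\gamma-|l|_\s<1$, since otherwise the Taylor polynomial $T^{\gamma-|l|_\s}_{x;y-x}\partial^l f$ contains further nonconstant terms. In short, drop the last paragraph and the argument is both correct and identical in spirit to the paper's.
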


\begin{remark}
Note that the norm in \eqref{eq:HoelderNorm} is equivalent to
\begin{align}
\label{eq:HoelderNormGlobal}
\sup_{l\in \NN^d_{<\gamma}}\|\partial^l f\|_{C_b(\RR^d;X)}+\sup_{l\in \NN^d_{<\gamma}}\sup_{x,y\in \RR^d,x\neq y} \frac{\|\partial^l f(y)-T^{\gamma-|l|_\s}_{x;y-x}\partial^l f\|_X}{\|y-x\|^{\gamma-|l|_\s}_\s}\,,
\end{align}
since for $\|x-y\|_s > 1$ the second term of \eqref{eq:HoelderNormGlobal} can be bounded via the first term of \eqref{eq:HoelderNorm}.  
\end{remark}

\begin{remark}
	The restriction $\gamma \notin \NN$ is not a shortcoming of our proof: The equivalence of the norms really fails for integers $\gamma$, and $\|\cdot \|_{\Bs^\gamma(\RR^d;X)}$ is instead equivalent to a ``Zygmund type'' norm, see~\cite[Theorem~2.37]{Bahouri} for a result in that direction.
\end{remark}

\begin{proof}
Assume that $f\in \Bs^\gamma(\RR^d;X)$ as defined in Definition \ref{def:Besov} above and further, without loss of generality, that $\|f\|_{\Bs^\gamma(\RR^d;X)}\leq 1$. If we write $\overline{\varDelta}_j f:=\sum_{i:\,|i-j|\leq 1} \varDelta_i f$, we have by spectral support properties $\varDelta_j f=\varDelta_j \overline{\varDelta}_j f=\Psi^j \ast \overline{\varDelta}_j f$. Indeed, by construction of $(\varphi_j)_{j\geq -1}$ we have $\varphi_{j}\cdot \sum_{i:\,|j-i|\leq 1} \varphi_{i}=\varphi_j\cdot 1=\varphi_j$ so that by our definition of $\varDelta_j$
\begin{align*}
\FF (\varDelta_j \overline{\varDelta}_j f)=\varphi_{j}\sum_{i:\,|j-i|\leq 1} \varphi_{i} \,\FF f=\varphi_j\, \FF f=\FF(\varDelta_j f)\,,
\end{align*}
from which the claimed identity follows. With Lemma \ref{lem:PsiScal} and Young's convolution inequality we obtain
\begin{align}
\|\partial^l \varDelta_j f\|_{C_b(\RR^d;X)} &\oset{$(\ast)$}{=} \|\partial^l \varDelta_j f\|_{L^\infty(\RR^d;X)} =\| \partial^l \Psi^j\ast \overline{\varDelta}_j f\|_{L^\infty(\RR^d;X)}\nonumber \\
 &\lesssim \|\partial^l \Psi^j\|_{L^1(\RR^d)}\,\|\overline{\varDelta}_j f\|_{L^\infty(\RR^d;X)} 
\oset{Lem. \ref{lem:PsiScal}}{\lesssim} 2^{j|l|_\s} 2^{-j\gamma}=2^{-j(\gamma-|l|_\s)}\,,
\label{eq:DerivativeBlockEstimate}
\end{align}
where we used in $(\ast)$ that $\varDelta_j f$ is smooth and in particular continuous. Decomposing $f=\sum_{j\geq -1} \varDelta_j f$ this implies that the first term of \eqref{eq:HoelderNorm} is bounded
\begin{align*}
\|\partial^l f\|_{C_b(\RR^d;X)}\leq  \sum_{j\geq -1} \|\partial^l \varDelta_j f\|_{C_b(\RR^d;X)}   \lesssim \sum_{j>-1} 2^{-j(\gamma-|l|_\s)}\lesssim 1\,.
\end{align*}
 
To estimate the second term of \eqref{eq:HoelderNorm} we consider for $j\geq -1$ and $x,\,y\in \RR^d$ with $0<\|x-y\|_\s\leq 1$
\begin{align*}
& (\partial^l \varDelta_j f)_y-\sum_{k\in \NN^d_{<\gamma-|l|_\s}}\frac{(\partial^{k+l} \varDelta_j f)_x}{k!} (y-x)^k \\
&\hspace{40pt} =\intr \dd u\, \Big(\partial^l \kk_{y-u}-\sum_{k\in \NN^d_{<\gamma-|l|_\s}} \frac{\partial^{k+l} \kk_{x-u}}{k!} (y-x)^k \Big) \, (\overline{\Delta}_j f)_u\,,
\end{align*}
where we used once more that $\varDelta_j f=\varDelta_j\overline{\varDelta}_j f=\Psi^j\ast \overline{\varDelta}_j f$ for $\overline{\varDelta}_j f =\sum_{i:\,|i-j|\leq 1} \varDelta_{i} f$ as above. Formula \eqref{eq:Taylor} for the Taylor remainder then gives
\begin{align*}
 = \sum_{k\in \NN^d_{> \gamma-|l|_\s}} \frac{(x-y)^k}{(k-e_{\mathfrak{m}(k)})!}  \int_0^1 \dd t\,\intr \dd u  \,\partial^{k+l} \Psi^j_{x-u+v_t^k(y-x)} (1-t)^{k_{\mathfrak{m}(k)}-1} \,(\overline{\varDelta}_j f)_u\,.
\end{align*}
With Young's inequality we thus obtain the bound
\begin{align}
&\Big\| (\partial^l\varDelta_j f)_y - \sum_{k\in \NN^d_{<\gamma-|l|_\s}}\frac{(\partial^{k+l}  \varDelta_j f)_x}{k!} (y-x)^k \Big\|_X     \nonumber \\
&\hspace{20pt}\overset{\|y-x\|_\s \le 1}{\lesssim} \sum_{k\in \NN^d_{> \gamma-|l|_\s}} \|y-x\|^{|k|_\s}_\s \int_0^1 \dd t\,\|(\partial^{k+l} \Psi^j\ast\overline{\varDelta}_j f)_{x+v^k_t(y-x)}\|_X  \nonumber \\
&\hspace{20pt}\lesssim \sum_{k\in \NN^d_{> \gamma-|l|_\s}} \|y-x\|^{|k|_\s}_\s 
\int_0^1 \dd t\,\| \overline{\varDelta}_j f\|_{L^\infty(\RR^d;X)}\,\|\partial^{k+l} \Psi^j\|_{L^1(\RR^d)} \nonumber \\
&\hspace{20pt}\lesssim \sum_{k\in \NN^d_{> \gamma-|l|_\s}} \|x-y\|_\s^{|k|_\s} 2^{j(|k|_\s+|l|_\s-\gamma)} \,,
\label{eq:Hoelder1}
\end{align}
where we applied $\|\overline{\varDelta}_j f\|_{L^\infty(\RR^d;X)}\lesssim 2^{-j\gamma} $ and $\|\partial^{k+l} \Psi^j\|_{L^1(\RR^d)}\lesssim 2^{j(|k|_s+|l|_\s)}$ (by Lemma \ref{lem:PsiScal}) in the last step.

On the other hand, by \eqref{eq:DerivativeBlockEstimate}, we have the easy estimate
\begin{align}
\label{eq:Hoelder2} \nonumber
&\Big\|(\partial^l \varDelta_j f)_y-\sum_{k\in \NN^d_{<\gamma-|l|_\s}}\frac{(\partial^{k+l}  \varDelta_j f)_x}{k!} (y-x)^k \Big\|_X \\
&\hspace{50pt} \lesssim 2^{-j(\gamma-|l|_\s)} +\sum_{k\in \NN^d_{<\gamma-|l|_\s}}2^{-j(\gamma-|l|_\s-|k|_\s)} \|y-x\|_\s^{|k|_\s} \,.
\end{align}
Next, we decompose the Taylor expansion in a ``low-frequency'' and a ``high-frequency'' term. That is, choose $j'\geq -1$ such that $2^{-j'-1}< \|y-x\|_\s \leq 2^{-j'}$ and split
\begin{align*}
\partial^l f_y-\sum_{k\in \NN^d_{<\gamma-|l|_\s}} \frac{(\partial_{k+l} f)_x}{k!} (y-x)^k &=\sum_{j:\,j\leq j'}  \big[(\partial^l \varDelta_j f)_y-\sum_{k\in \NN^d_{<\gamma-|l|_\s}}\frac{(\partial^{k+l}  \varDelta_j f)_x}{k!} (y-x)^k \big]\\
&\quad +\sum_{j:\,j>j'}\big[ (\partial^l \varDelta_j f)_y-\sum_{k\in \NN^d_{<\gamma-|l|_\s}}\frac{(\partial^{k+l}  \varDelta_j f)_x}{k!} (y-x)^k\big]\,.
\end{align*}
Applying now \eqref{eq:Hoelder1} to the first and \eqref{eq:Hoelder2} to the second term yields the first direction of the equivalence of the norms:
\begin{align*}
&\Big\|\partial^l f_y-\sum_{k\in \NN^d_{<\gamma-|l|_\s}} \frac{(\partial^{k+l} f)_x}{k!} (y-x)^k \Big\|_X  \\
 &\lesssim \sum_{j:\,j\leq j'} \sum_{k\in \NN^d_{> \gamma-|l|_\s}} \|x-y\|_\s^{|k|_\s} 2^{j(|k|_\s+|l|_\s-\gamma)}   + \sum_{j:\,j>j'} \Big( 2^{-j(\gamma-|l|_\s)} +\sum_{k\in \NN^d_{<\gamma-|l|_\s}}2^{-j(\gamma-|l|_\s-|k|_\s)} \|y-x\|_\s^{|k|_\s} \Big) \\
&\lesssim \sum_{k\in \NN^d_{> \gamma-|l|_\s}} \|x-y\|_\s^{|k|_\s} 2^{j'(|k|_\s+|l|_\s-\gamma)}
+  \Big( 2^{-j'(\gamma-|l|_\s)} +\sum_{k\in \NN^d_{<\gamma-|l|_\s}}2^{-j'(\gamma-|l|_\s-|k|_\s)} \|y-x\|_\s^{|k|_\s} \Big) \\ 
& \lesssim 
\|y-x\|_\s^{\gamma-|l|_\s}\,.
\end{align*}
For the opposite direction suppose without loss of generality that \eqref{eq:HoelderNormGlobal} is bounded by 1. For $j> -1$ we then have
\begin{align*}
\|(\varDelta_j f)_x\|_X &=\big\| \intr \dd u \Psi^j_{x-u} f_u\big\|_X\oset{Lem. \ref{lem:PsiPol}}{=}\big\|\intr \dd u\, \Psi^j_{x-u} \big(f_u-\sum_{k\in \NN^d_{<\gamma}}\frac{(\partial^k f)_x}{k!} (u-x)^k\big)\big\|_X \\
&\leq \intr \dd u\, |\Psi^j_{x-u}|\cdot \|f_u-T^\gamma_{x;u-x}f\|_X\oset{Lem. \ref{lem:PsiPolScal}}{\lesssim} 2^{-j\gamma}\,,
\end{align*}
where we used Lemma \ref{lem:PsiPol} to introduce a term $\sum_{k\in \NN^d_{<\gamma}}\frac{(\partial^k f)_x}{k!}\intr \dd u\, \Psi^j_{x-u}  (u-x)^k=0$. To bound $\Delta_{-1} f$ we apply \eqref{eq:LpLp} to get $\|\varDelta_{-1} f\|_{L^\infty(\RR^d;X)}\lesssim \|f\|_{L^\infty(\RR^d;X)}=\|f\|_{C_b(\RR^d;X)}\leq 1$, which shows that $\| f\|_{\Bs^\gamma(\RR^d;X)} \lesssim 1$.
\end{proof}

\begin{lemma}
\label{lem:ActionDerivative}
Let $X$ be a Banach space, let $\gamma\in \RR$ and $k\in \NN^d$. We have for $f\in \Bs^\gamma(\RR^d;X)$
\begin{align*}
\|\partial^k f\|_{\Bs^{\gamma-|k|_\s}(\RR^d;X)} \lesssim \| f\|_{\Bs^{\gamma}(\RR^d;X)}.
\end{align*}
\end{lemma}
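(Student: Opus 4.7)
The plan is to estimate the Littlewood--Paley blocks $\varDelta_j (\partial^k f)$ directly using the spectral localisation of $\varDelta_j f$, reusing the trick already employed in the proof of Lemma~\ref{lem:Hoelder}. First I would note that, by commutation of the Fourier multiplier $\varphi_j(D)$ with the differential operator $\partial^k$ on $\Sw'(\RR^d;X)$, we have the identity $\varDelta_j(\partial^k f) = \partial^k (\varDelta_j f)$, where the right-hand side is pointwise well defined since $\varDelta_j f$ is smooth.

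Next, for $j \geq 0$, exactly as in the proof of Lemma~\ref{lem:Hoelder}, the support properties of the dyadic partition of unity yield $\varDelta_j f = \Psi^j \ast \overline{\varDelta}_j f$, where $\overline{\varDelta}_j f = \sum_{i:\,|i-j|\leq 1} \varDelta_i f$. Differentiating under the convolution gives
\begin{align*}
\partial^k(\varDelta_j f) = (\partial^k \Psi^j) \ast \overline{\varDelta}_j f,
\end{align*}
so that by Young's convolution inequality and Lemma~\ref{lem:PsiScal} we obtain
\begin{align*}
\|\partial^k (\varDelta_j f)\|_{L^\infty(\RR^d;X)} \leq \|\partial^k \Psi^j\|_{L^1(\RR^d)}\,\|\overline{\varDelta}_j f\|_{L^\infty(\RR^d;X)} \lesssim 2^{j|k|_\s}\,2^{-j\gamma}\,\|f\|_{\Bs^\gamma(\RR^d;X)}.
\end{align*}
The case $j=-1$ is handled by the same argument, the only difference being that $\|\partial^k \Psi^{-1}\|_{L^1(\RR^d)}$ is a finite constant independent of $j$, and $\overline{\varDelta}_{-1} f = \varDelta_{-1} f + \varDelta_0 f$, so $\|\overline{\varDelta}_{-1} f\|_{L^\infty(\RR^d;X)} \lesssim \|f\|_{\Bs^\gamma(\RR^d;X)}$.

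Multiplying the resulting bound by $2^{j(\gamma-|k|_\s)}$ and taking the supremum over $j\geq -1$ yields the claim. There is no genuine obstacle here; the only point requiring a little care is the spectral localisation used to ``upgrade'' the factor $2^{j|k|_\s}$ obtained from differentiating $\Psi^j$ into an honest derivative estimate, but this is precisely the content of Lemma~\ref{lem:PsiScal}.
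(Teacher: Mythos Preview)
Your proof is correct and follows essentially the same approach as the paper: both commute $\partial^k$ with $\varDelta_j$, write $\varDelta_j f=\Psi^j\ast\overline{\varDelta}_j f$, push the derivative onto $\Psi^j$, and invoke Young together with Lemma~\ref{lem:PsiScal}. The paper simply cites the already-established estimate~\eqref{eq:DerivativeBlockEstimate} from the proof of Lemma~\ref{lem:Hoelder}, while you spell out the same computation in full and treat the block $j=-1$ explicitly.
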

\begin{proof}
As in the beginning of the proof of Lemma \ref{lem:Hoelder} we use the estimate \eqref{eq:DerivativeBlockEstimate}:
\begin{align*}
\|\varDelta_j \partial^k f\|_{L^p(\RR^d;X)}=\|\partial^k\varDelta_j  f\|_{L^p(\RR^d;X)}\lesssim 2^{j|k|_\s}\,\|\overline{\varDelta}_j f\|_{L^p(\RR^d;X)}\,.
\end{align*}
with $\overline{\varDelta}_j f=\sum_{i:\,|i-j|\leq 1} \varDelta_i f$, which implies the claim. 
\end{proof}

\subsection{Basics of regularity structures}
\label{subsec:RegularityStructures}
In this subsection we recall the basic definition of a regularity structure as a graded vector space equipped with a linear group. We then give a recap on models and on modelled distributions, a concept that is central to this article.

\label{sec:RegularityStructures}
\begin{definition}\cite[Definition 2.1]{RegularityStructures}
\label{def:RegularityStructure} 
A regularity structure is a triple $\mathscr{T}=(A,\mathcal{T},G)$ consisting of:
\begin{itemize}
\item A locally finite \emph{index set} $A\subseteq \RR$, bounded from below, such that $0\in A$.
\item A \emph{model space} $\mathcal{T}=\bigoplus_{\alpha\in A} \mathcal{T}_\alpha$, where each $\mathcal{T}_\alpha$ is a Banach space equipped with a norm $\|\cdot\|_{\tT_\alpha}$. The space $\mathcal{T}_0$ is spanned by a unit vector which we call $\mathbf{1}$.
\item A \emph{structure group} $G$ of linear operators acting on $\tT$, such that for all $\Gamma\in G,\,\alpha\in A,\,\tau\in \tT_\alpha$
\begin{align}
\label{eq:GroupActionHomogeneousSpace}
 \Gamma \tau -\tau\in \bigoplus_{\beta < \alpha} \tT_\beta\,.
 \end{align} 
\end{itemize}
 The elements of $A$ are called \emph{homogeneities}.
 \glsadd{T}
\end{definition}
Given $\tau \in \tT$ and $\alpha\in A$ we write $\tau^\alpha$ for the projection of $\tau$ on $\tT_\alpha$. If $\dim\,\tT_\alpha<\infty$ and we have a basis $\{ e_i\}$ for $\tT_\alpha$, we also write $\tau^{e_j}$ for the coefficient of $\tau^\alpha$ with respect to $e_j$. For example if $\tau\in \tT$ with $\tau -c \cdot \uu \in \bigoplus_{\alpha \in A,\,\alpha \neq 0} \tT_\alpha$ we have $\tau^\uu=c$.
\glsadd{taualpha}
\glsadd{tauuu}
\label{tauuu}
Note that by this definition $\tau^\alpha\in \tT_\alpha$ is a vector in a Banach space, while $\tau^{e_i}$ is a complex number. We also write 
\begin{align*}
\tau^{<\gamma}:=\sum_{\alpha \in A:\,\alpha<\gamma} \tau^\alpha\,,
\end{align*}
and similarly for $\tau^{>\gamma}$. For $\Gamma\in G$  we use the abbreviation 
\glsadd{Gammaalphatau}
\begin{align}
\label{eq:Gammaalphatau}
\Gamma^{\alpha}\tau:=\left( \Gamma \tau\right)^{\alpha}\,.
\end{align}
The same remark applies for the ``basis notation'' above, for instance $\Gamma^{\uu} \tau:=(\Gamma \tau)^\uu$. 
We will also need the space
\begin{align*}
\tT_\gamma^-:=\bigoplus_{\alpha\in A:\,\alpha<\gamma} \tT_\alpha
\end{align*}
for $\gamma\in \RR$, so that for example $\tau^{<\gamma} \in \tT_\gamma^-$ for $\tau \in \tT$.

Let us now introduce the notion of a model.

\begin{definition}{\cite[Definition 2.17]{RegularityStructures}}
\label{def:Model}
Let $\T=(A,\tT,G)$ be a regularity structure and let $\s\in \NNx^d$ be a scaling vector. 
A model on $\T$ with scaling $\s$ is a family of linear maps $\Gamma_{xy}\in G,\,\Pi_x:\,\tT\rightarrow \mathcal{S}'(\RR^d)$ for $x,\,y\in \RR^d$ that satisfy for $x,y,z\in \RR^d$
\begin{align}
\label{eq:ModelChensRelation}
\Gamma_{xx}=\mathrm{Id}_{\tT},\,\Gamma_{xy}\Gamma_{yz}=\Gamma_{xz},\,\Pi_{x}=\Pi_{y}\Gamma_{yx}\,,
\end{align}
and further for $\alpha,\beta\in A,\,\tau \in \tT_\alpha$ and $\beta<\alpha$ 
\begin{align}
&\|\Gamma^{\beta}_{yx}\tau\|_{\tT_\beta} \lesssim \, \|\tau\|_{\tT_\alpha}\cdot \|x-y\|_\s^{\alpha-\beta}, \label{eq:BoundGamma} \\
&|\Pi_x \tau (\varphi^\lambda_{\plc-x})| \lesssim \,\|\tau\|_{\tT_\alpha}\cdot  \lambda^{\alpha}\,, \label{eq:BoundPi}
\end{align}
with $\varphi^\lambda_{\plc-x}=\lambda^{-|\s|}\varphi(\lambda^{-\s}(\cdot-x))$, uniformly over all $\lambda\in (0,1]$ and $\varphi\in C^\infty_c(\RR^d),\,\supp\,\varphi\subseteq B_\s(0,1),\,\|\varphi\|_{C^{r}}\leq 1$, with $r\in \NN$ being the smallest number strictly bigger than $-\min A$. As in \eqref{eq:Gammaalphatau} we wrote $\Gamma^\beta_{yx}\tau:=(\Gamma_{yx}\tau)^\beta$ for the projection of $\Gamma_{yx} \tau$ onto $\tT_\beta$.

We further introduce:
\begin{align*}
\|\Pi\|_\gamma &:=\sup_{\varphi}\,\, \sup_{\alpha \in A,\,\alpha<\gamma\,, \tau \in \tT_\alpha,\,\|\tau\|_{\tT_\alpha}\leq 1 } \,\, \sup_{\lambda\in (0,1]} \lambda^{-\alpha}|\Pi_x \tau (\varphi^\lambda_{\plc-x})|\,,\\
\|\Gamma\|_\gamma &:=\sup_{x,\,y\in \RR^d,\,x\neq y} \,\, \sup_{\alpha,\beta\in A,\,\beta<\alpha<\gamma,\,\tau \in \tT_\alpha,\,\|\tau\|_{\tT_\alpha}\leq 1} \|\Gamma^{\beta}_{yx}\tau \|_{\tT_\beta} \|x-y\|_\s^{\beta-\alpha}\,, 
\end{align*}
where $\sup_\varphi$ runs over the class of $\varphi$ described above.

We sometimes write $\Gamma_{y,x}$  instead of $\Gamma_{yx}$ to separate the arguments more clearly. 
\end{definition}
\begin{remark}
The functions $\Pi_x:\tT\rightarrow \Sw'(\RR^d)$ do not play an important role in this article and are just mentioned for the sake of completeness. However, compare \cite{GIP} and \cite{Dissertation} for similar concepts to the ones presented here  where $\Pi_x$ becomes important.
\glsadd{PiGamma}
\end{remark}
Note that we require global bounds on the model $(\Pi,\Gamma)$ in Definition \ref{def:Model}, which is different from \cite{RegularityStructures} where the corresponding bounds only need to hold locally uniformly on compact sets.
The main reason for requiring global estimates is that we work with an approach based on Fourier analysis, for which it seems unavoidable to work with bounds on the full space. Compare also \cite{ReconstructionBesov} for another work with these assumptions. Global bounds are given immediately in the study of a SPDE if the considered equation is driven by a periodic space white noise. In the case of a spatially periodic space-time white noise we can replace the noise with one that  is also periodic in time,  with a period that is bigger than the time horizon of the equation. If one wants to consider problems with non-periodic noise, one would have to introduce weights in the analytic bounds of Definition~\ref{def:Model}, similarly as in \cite{HairerLabbeR3, WeberMourrat, DiscreteParacontrolled}. We will avoid doing so for the sake of simplicity. 

%

A simple example of a regularity structure equipped with a model is the \emph{polynomial regularity structure} $\overline{\T}=(\overline{A},\overline{\tT},\overline{G})$, where
\glsadd{overlineT}
\begin{align}
\label{eq:PolynomialStructure}
\overline{\tT}:=\Span{X^k \,\big\vert\,k\in \NN^d}\,,
\end{align}
where $\Span{\ldots}$ denotes the vector space generated by the set in the braces and where we identify $X^0=\mathbf{1}$. We assign the homogeneities $|X^k|=|k|_\s$ to the symbols $X^k$ and define $\overline{A}:=\NN$, so that
\begin{align*}
\overline{\tT}=\bigoplus_{\alpha \in \overline{A}} \overline{\tT}_\alpha:=\bigoplus_{\alpha \in \overline{A}} \Span{ X^k\, \big\vert \, k\in \NN^d, \, |k|_\s=\alpha  }\,.
\end{align*}
As each $\overline{\tT}_\alpha=\Span{X^k\,\vert \,k\in \NN^d,\,|k|_\s=\alpha}$ is finite dimensional any choice of norm on $\overline{\tT}_\alpha$ will lead to the same topology; we take
\begin{align*}
\Big\|\sum_{k\in \NN^d:\,|k|_\s=\alpha} a_k\,X^k \Big\|_{\overline{\tT}_\alpha} :=\sum_{k\in \NN^d:\,|k|_\s=\alpha} |a_k|\,.
\end{align*}
Consider then the group $\overline{G}=\{\overline{\Gamma}_{h}\,\vert\,h\in \RR^d\}$ with group law $\overline{\Gamma}_h\overline{\Gamma}_{h'}:=\overline{\Gamma}_{h+h'}$ for $h,\,h'\in \RR^d$ (so that $\overline{G}$ is essentially $(\RR^d,+)$). Fix the action of $\overline{G}$ on $\overline{\tT}$ by requiring $\overline{\Gamma}_h X^k:=(X+h\mathbf{1})^k$ (with the obvious interpretation of the multiplication on the right hand side). 

We can realize a model on $\overline{\T}=(\overline{A},\overline{\tT},\overline{G})$ via 
\begin{align}
\label{eq:PolynomialModel}
\overline{\Pi}_x X^k(y)=(y-x)^k,\qquad \overline{\Gamma}_{yx}:=\overline{\Gamma}_{y-x}\,,
\end{align}
for $x,y\in \RR^d$ and $k\in \NN^d$. The polynomial regularity structure $\overline{\T}$ is the example one should have in mind when it comes to comparison of regularity structures with results from ``more classical'' analysis. From this perspective the spaces $\DD^\gamma$ which we are going to define now (taken from \cite[Definition 3.1]{RegularityStructures}) are a generalization of classical Hölder spaces, compare Lemma \ref{lem:HoelderAsModelled} below. 

\begin{definition}
\label{def:Dgamma}
Let $\T=(A,\tT,G)$ be a regularity structure, equipped with a model $(\Pi,\Gamma)$ with scaling $\s\in \NNx^d$. Given $\gamma\in \RR$ we say that a mapping $F:\,\RR^d \rightarrow \tT_{\gamma}^-$ belongs to $\DD^\gamma(\RR^d;\tT)=\DD^\gamma(\RR^d;\tT,\Gamma)$ if 
\begin{align}
\label{eq:NormDgamma}
\|F\|_{\DD^\gamma(\RR^d;\tT)}:=\sup_{\alpha \in A,\, x\in \RR^d} \|F^\alpha_x\|_{\tT_\alpha}+\sup_{\alpha \in A,\,x,y\in \RR^d,\,x\neq y} \frac{\|F^\alpha_y-\Gamma^\alpha_{yx}F_x \|_{\tT_\alpha}}{\|y-x\|_\s^{\gamma-\alpha}}<\infty\,.
\end{align}
We call the objects in $\DD^\gamma(\RR^d;\tT)$ \emph{modelled distributions}. 
\glsadd{DDgamma}
\end{definition}

\begin{remark}
\label{rem:LongDistancexy}
Due to the bound \eqref{eq:BoundGamma} it is enough to consider in the second term in \eqref{eq:NormDgamma} only pairs $x,y\in \RR^d$ with $\|x-y\|_\s\leq 1$, as long as the first supremum in \eqref{eq:NormDgamma} is finite. 
\end{remark}

\begin{remark}
\label{rem:DgammaGlobal}
Note that all $F\in \DD^\gamma(\RR^d;\tT)$ satisfy \emph{global} bounds, which is different from \cite{RegularityStructures} where the notation $F\in \DD^\gamma(\RR^d;\tT)$ only indicates local bounds. In a framework that is largely based on Fourier analysis such as the paracontrolled approach it seems natural to assume global bounds. Local spaces could then be defined afterwards with the help of extension operators, see e.g. Section~5.3.3 of~\cite{Dissertation} for a version of the Whitney extension theorem in regularity structures.
\end{remark}

The definition of a modelled distribution $F\in \DD^\gamma(\RR^d;\tT)$ implies the continuity of every component  $F^\alpha\in C_b(\RR^d;\tT_\alpha)$ with $\alpha \in A$ and the bound $\sup_{\alpha\in A,\,x\in \RR^d} \|F^\alpha(x)\|_{\tT_\alpha}<\infty$. We will denote functions $F: \RR^d\rightarrow \tT$ that satisfy these two properties by $C_b(\RR^d;\tT)$ and set
\begin{align*}
\|F\|_{C_b(\RR^d;\tT)}:=\sup_{\alpha\in A,\,x\in \RR^d} \|F^\alpha_x\|_{\tT_\alpha}\,,
\end{align*}
so that in particular $\DD^\gamma(\RR^d;\tT)\subseteq C_b(\RR^d;\tT)$. It turns out that on the polynomial regularity structure $\overline{\T}$ it is rather easy to describe $\mathscr{D}^\gamma$ in terms of the Besov spaces from Definition \ref{def:Besov}.

\begin{lemma}
\label{lem:HoelderAsModelled}
Let $\overline{\T}=(\overline{A},\overline{\tT},\overline{G})$ be the polynomial regularity structure with model $(\overline{\Pi},\overline{\Gamma})$ introduced on page \pageref{eq:PolynomialStructure}, for some scaling vector $\s\in \NNx^d$. Define for $\gamma\in \RR_+\backslash \NN$ the lifted Besov space
\begin{align*}
 \PP^{\gamma}(\RR^d;\overline{\tT}) :=\{  F_f\, \vert\, f\in \Bs^\gamma(\RR^d) \}\,,
\end{align*} 
where $F_f:=\sum_{k\in \NN^d_{<\gamma}}\frac{1}{k!}\,\partial^k f\cdot X^k\in C_b(\RR^d;\overline{\tT})$ denotes the lift of $f\in \Bs^\gamma(\RR^d)=\Bs^\gamma(\RR^d;\CC)$ to the polynomial structure. Equip further $\PP^\gamma(\RR^d;\overline{\tT})$ with the norm
\begin{align*}
\|F\|_{\PP^\gamma(\RR^d;\overline{\tT})}:=\sup_{\alpha\in \overline{A}} \|F^{\alpha}\|_{\Bs^{\gamma-\alpha}(\RR^d;\tT_\alpha)}\,,
\end{align*}
where $F^\alpha$ denotes the projection of $F\in \PP^\gamma(\RR^d;\overline{\tT})$ on $\tT_\alpha$. We then have
\begin{align*}
\DD^\gamma(\RR^d;\overline{\tT})=\PP^\gamma(\RR^d;\overline{\tT})
\end{align*}
with equivalent norms.
\end{lemma}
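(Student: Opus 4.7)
The plan is that on the polynomial regularity structure a modelled distribution is essentially just a family of candidate derivatives, so the equivalence should reduce directly to the Hölder characterisation in Lemma~\ref{lem:Hoelder}. First I would compute the action of $\overline{\Gamma}_{yx}$ on $\overline{\tT}$ explicitly: from $\overline{\Gamma}_{yx} X^k = (X+(y-x)\uu)^k = \sum_{l\leq k} \binom{k}{l}(y-x)^{k-l}X^l$ the $X^l$-coefficient of $\overline{\Gamma}_{yx} F_x$ equals
\[
\frac{1}{l!} \sum_{m:\, |l+m|_\s<\gamma} \frac{(l+m)!\,F^{X^{l+m}}_x}{m!}\,(y-x)^m.
\]
For $F = F_f$ this collapses to $\frac{1}{l!}T^{\gamma-|l|_\s}_{x;y-x}\partial^l f$, yielding the central identity
\[
\bigl(F_y - \overline{\Gamma}_{yx} F_x\bigr)^{X^l} = \frac{1}{l!}\,R^{\gamma-|l|_\s}_{x;y-x}\partial^l f.
\]

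For the forward inclusion $\PP^\gamma(\RR^d;\overline{\tT}) \subseteq \DD^\gamma(\RR^d;\overline{\tT})$, starting from $f\in \Bs^\gamma(\RR^d)$, Lemma~\ref{lem:Hoelder} in the equivalent form~\eqref{eq:HoelderNormGlobal} delivers uniform bounds on $\|\partial^l f\|_{C_b(\RR^d)}$ and on the Taylor remainder $|R^{\gamma-|l|_\s}_{x;y-x}\partial^l f|$ for every $l$ with $|l|_\s<\gamma$; plugging these into the identity above reproduces the two suprema of~\eqref{eq:NormDgamma}, so $\|F_f\|_{\DD^\gamma(\RR^d;\overline{\tT})} \lesssim \|f\|_{\Bs^\gamma(\RR^d)}$, which is equivalent to $\|F_f\|_{\PP^\gamma(\RR^d;\overline{\tT})}$ by Lemma~\ref{lem:ActionDerivative} together with $F_f^0 = f$. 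Conversely, given $F\in \DD^\gamma(\RR^d;\overline{\tT})$ I would set $f := F^\uu$ and $g_l := l!\cdot F^{X^l}$ for $|l|_\s<\gamma$, so that the modelled-distribution bound at level $\alpha = |l|_\s$ unfolds into
\[
\Big| g_l(y) - \sum_{m:\, |l+m|_\s<\gamma} \frac{g_{l+m}(x)}{m!}(y-x)^m \Big| \lesssim \|y-x\|_\s^{\gamma-|l|_\s}.
\]
An induction on $|l|_\s$, using the $\alpha = 0$ bound along coordinate directions $y = x + h e_i$ (so that $\|y-x\|_\s = h^{1/\s_i}$) to peel off $\partial_i f = g_{e_i}$ and iterating the argument on each $g_l$ via its own level-$|l|_\s$ bound, identifies $g_l$ with the classical derivative $\partial^l f$ throughout $|l|_\s<\gamma$. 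With this identification the displayed bound is precisely the anisotropic Hölder condition~\eqref{eq:HoelderNormGlobal}, so Lemma~\ref{lem:Hoelder} yields $f\in \Bs^\gamma(\RR^d)$ with $\|f\|_{\Bs^\gamma}\lesssim \|F\|_{\DD^\gamma}$, and $F = F_f$.

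The main obstacle is precisely this step $g_l = \partial^l f$: the modelled-distribution hypothesis provides only formal Taylor expansions at every order, and these need to be promoted to honest differentiability of $f$. The driving fact is the uniqueness of anisotropic Taylor coefficients: any polynomial in $h$ of anisotropic degree strictly less than $\gamma$ which is $O(\|h\|_\s^\gamma)$ as $h\to 0$ must vanish, so two Taylor-like expansions of the same continuous function about $x$ must have identical coefficients; together with the continuity of each $g_l$ (itself a consequence of its level-$|l|_\s$ bound) this forces $g_l(x) = \partial^l f(x)$ pointwise. The excluded integer case $\gamma\in \NN$ is inherited from the analogous restriction in Lemma~\ref{lem:Hoelder}.
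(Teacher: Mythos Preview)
Your proof is correct and follows essentially the same route as the paper: both identify the $\DD^\gamma$ increment $(F_y-\overline{\Gamma}_{yx}F_x)^{X^l}$ with the anisotropic Taylor remainder of $\partial^l f$ and then invoke Lemma~\ref{lem:Hoelder} (together with Lemma~\ref{lem:ActionDerivative} for the norm equivalence $\|F_f\|_{\PP^\gamma}\approx\|f\|_{\Bs^\gamma}$). For the reverse inclusion the paper simply says that every $F\in\DD^\gamma(\RR^d;\overline{\tT})$ is of the form $F_f$ ``which can be checked inductively''; your induction on $|l|_\s$ via coordinate-direction increments and uniqueness of anisotropic Taylor coefficients is exactly the argument the paper has in mind but leaves implicit.
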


\begin{proof}
Let $F\in \PP^\gamma(\RR^d;\overline{\tT})$. By definition there is a (unique) $f\in \Bs^\gamma(\RR^d)$ such that $F=F_f$. In particular we have for $\alpha\in \overline{A}$
\begin{align}
\label{eq:HoelderAsModelled1}
F^\alpha=F_f^\alpha=\sum_{k\in \NN^d:\,|k|=\alpha}\frac{1}{k!} \partial^k f\cdot X^k
\end{align}
and the norm $\|F\|_{\PP^\gamma(\RR^d;\overline{\tT})}$ is thus equivalent to
\begin{align*}
\|F\|_{\PP^\gamma(\RR^d;\overline{\tT})} \oset{\eqref{eq:HoelderAsModelled1}}{\approx} \sup_{k\in \NN^d_{<\gamma}} \|\partial^k f\|_{\Bs^{\gamma-|k|_\s}(\RR^d)} \oset{Lem. \ref{lem:ActionDerivative}}{\approx} \|f\|_{\Bs^\gamma(\RR^d)}\,.
\end{align*}
With Lemma \ref{lem:Hoelder} we thus get the equivalence 
\begin{align}
\|F\|_{\PP^\gamma(\RR^d;\overline{\tT})} \approx \sup_{l\in \NN^d_{<\gamma}}\|\partial^l f\|_{C_b(\RR^d)}+\sup_{l\in \NN^d_{<\gamma}}\sup_{x,y\in \RR^d,\,0<\|x-y\|_\s\leq 1} \frac{|\partial^l f(y)-T^{\gamma-|l|_\s}_{x;y-x}\partial^l f|}{\|y-x\|^{\gamma-|l|_\s}_\s}\,.
\label{eq:HoelderAsModelled2}
\end{align}
Since $T^{\gamma-|l|_\s}_{x;y-x}\partial^l f=\overline{\Gamma}^{X^l}_{yx} (F_f)_x=\overline{\Gamma}^{X^l}_{yx} F_x$ and $\overline{\Gamma}^\alpha_{yx} F_x = \sum_{l\in \NN^d:\,|l|_\s=\alpha} \overline{\Gamma}^{X^l}_{yx} F_x \cdot X^l$ for $\alpha\in \overline{A}$ we see that the right hand side of \eqref{eq:HoelderAsModelled2} is equivalent to $\|F\|_{\DD^\gamma(\RR^d)}$, which proves that $\PP^\gamma(\RR^d)\subseteq \DD^\gamma(\RR^d)$. The inverse direction follows in the same manner once we show that every $F\in \DD^\gamma(\RR^d)$ is of the form $F=F_f$ for some $f\in \Bs^\gamma(\RR^d)$, which can be checked inductively.
\end{proof}

\section{Paraproducts on regularity structures}

\label{sec:paramodelled}
Taking $X=\CC$ in Lemma \ref{lem:Hoelder} provides two distinct descriptions of the (anisotropic) Besov space $\Bs^\gamma(\RR^d)=\Bs^\gamma(\RR^d;\CC)$ with scaling vector $\s\in \NNx^d$ and regularity $\gamma\in \RR_+\backslash \NN$, given by a Littlewood-Paley characterization in Definition \ref{def:Besov} and a Hölder norm in Lemma \ref{lem:Hoelder}. In Lemma \ref{lem:HoelderAsModelled} these two characterizations were formulated in the framework of the polynomial regularity structure $\mathscr{T}=(\overline{A},\overline{\tT},\overline{G})$ with model $(\overline{\Pi},\overline{\Gamma})$, introduced on page \pageref{eq:PolynomialStructure}. Recall that the statement $F\in \DD^\gamma(\RR^d;\overline{\tT})$ is just saying that 
\begin{align}
\label{eq:InterweavingDgamma}
\|F^\alpha_y-\overline{\Gamma}^\alpha_{yx} F_x\|_{\overline{\tT}_\alpha}\lesssim \|y-x\|_\s^{\gamma-\alpha}
\end{align}
for $\alpha\in \overline{A}$, which can be seen as a Hölder-like characerization similar to the one in Lemma~\ref{lem:Hoelder}. On the other hand we introduced in Lemma \ref{lem:HoelderAsModelled} a ``Besov space'' $\PP^\gamma(\RR^d;\overline{\tT})$ as the space of functions $F:\RR^d\rightarrow \overline{\tT}$ for which it holds  
\begin{align}
\label{eq:InterweavingPgamma}
\|\varDelta_j F^\alpha\|_{L^\infty(\RR^d;\overline{\tT}_\alpha)}\lesssim 2^{-j(\gamma-\alpha)}\,.
\end{align}
and further
\begin{align}
\label{eq:PolynomialStructureCondition}
F^\alpha=\sum_{|k|_\s=\alpha} \frac{\partial^k F^\uu}{k!}\cdot X^k\,,
\end{align}
which, using the model $(\overline{\Pi},\overline{\Gamma})$, can also be written as
\begin{align}
\label{eq:InterweavingStructureCondition}
\partial^k (F^\alpha- \overline{\Gamma}_{\cdot x}^{\alpha} F_x)_x=0 \mbox{ for $k\in \NN^d_{<\gamma-\alpha}$}\,,
\end{align}
where $\partial^k (\overline{\Gamma}_{\cdot x}^{\alpha} F_x)_x$ should be read as the derivative of the map 
$y\mapsto \overline{\Gamma}_{y x}^{\alpha} F_x$ evaluated in the point $x$. Note that the relation \eqref{eq:PolynomialStructureCondition} (or equivalently \eqref{eq:InterweavingStructureCondition}) can be deduced from \eqref{eq:InterweavingDgamma}, but it has to be required explicitly in the definition of $\PP^\gamma(\RR^d;\overline{\tT})$ because condition \eqref{eq:InterweavingPgamma} lacks any description of the interplay between between different $F^\alpha,\,F^{\alpha'}$ with $\alpha\neq \alpha'$. However, requiring \eqref{eq:InterweavingStructureCondition} (or \eqref{eq:PolynomialStructureCondition}) and~\eqref{eq:InterweavingPgamma} we could deduce in Lemma \ref{lem:HoelderAsModelled} that $\DD^\gamma(\RR^d;\overline{\tT})=\PP^\gamma(\RR^d;\overline{\tT})$, even without resorting to the explicit construction of $F_f$ from $f$.
 
 Our aim in this article is to find a ``Besov space'' $\PP^\gamma(\RR^d;\tT)$ on a general regularity structure $(A,\T,G)$ with model $(\Pi,\Gamma)$ which describes the space of modelled distributions $\DD^\gamma(\RR^d;\tT)$. Namely, we want to find a space $\PP^\gamma(\RR^d;\tT)$, described in terms of Littlewood-Paley blocks, such that
\begin{align}
\label{eq:InterweavingBridge}
\DD^\gamma(\RR^d;\tT)=\PP^\gamma(\RR^d;\tT)\,.
\end{align}
Already in the original paper on paracontrolled distributions~\cite{GIP} the authors introduced a certain paraproduct $P(F,\Pi)$ on the regularity structure $(A,\tT,G)$ with model $(\Pi,\Gamma)$ and they conjectured that it might be possible to describe the space $\DD^\gamma(\RR^d;\tT)$ via such objects. We here show that this is indeed the case: We introduce a family of paraproducts $P(F,\Gamma^\alpha)$ and define a space $\PP^\gamma(\RR^d;\tT)$ by requiring instead of \eqref{eq:InterweavingPgamma} 
\begin{align}
\label{eq:InterweavingPgamma2}
\|\varDelta_j (F^\alpha-P(F,\Gamma^\alpha))\|_{L^\infty}\lesssim 2^{-j(\gamma-\alpha)}\,,
\end{align}
(which is just saying $F^\alpha-P(F,\Gamma^\alpha)\in \Bs^{\gamma-\alpha}(\RR^d;\tT_\alpha)$) and the \emph{structure condition} \eqref{eq:InterweavingStructureCondition} (with $\overline{\Gamma}$ replaced by $\Gamma$). Since the paraproducts $P(F,\Gamma^\alpha)$, described in Definition \ref{def:Paraproducts} below, vanish for $F$ with components in the polynomial structure $\overline{\tT}$, the bound~\eqref{eq:InterweavingPgamma2} is indeed a generalization of \eqref{eq:InterweavingPgamma}.

\subsubsection*{Paraproducts}

Let us motivate our definitions with a simple example of a singular SPDE, namely the parabolic Anderson model which reads as the following problem on $[0,\infty)\times \RR^2$: 
\begin{align}
\label{eq:PAM}
(\partial_t-\Delta_{\RR^2}) f=f\cdot \xi
\end{align}
with periodic space white noise $\xi\in \Sw'(\RR^2)$ (and a suitable renormalization that we hide for simplicity). The idea in \cite{GIP} is to define first $I\xi$ to be the time-independent solution to $(\partial_t-\Delta_{\RR^2})I \xi =-\Delta_{\RR^2} I\xi =\xi + \tilde{\xi}$ for an infinitely smooth $\tilde \xi$, and to consider instead of $f$ the object 
\begin{align}
\label{eq:fsharp}
f^{\sharp}:=f-f \para  I\xi
\end{align}
with the paraproduct
\begin{align}
\label{eq:PAMParaproduct}
(f\para I \xi)_x=\!\!\sum_{j\geq -1} \sum_{i<j-1} (\varDelta_i f)_x (\varDelta_j I\xi)_x =\!\! \sum_{j>0} \dintr \dd u\dd v\,\KKu \kkv\, f_u \cdot (I\xi)_v\,,
\end{align} 
where the integration domain for each integral should be read as \emph{space-time}, that is $\RR^{1+2}$. Here we cheat a little bit since in \cite{GIP} a modified paraproduct $\mpara$ in space is considered which allows for a cut-off for negative times. It then turns out that $f^{\sharp}$ solves a ``better'' equation than $f$, which allows to derive a priori estimates and to solve the equation. In \eqref{eq:PAMParaproduct} we now take functions $\kk,\,\KK$ that are constructed as in Subsection \ref{subsec:Besov} with an \emph{anisotropic} scaling $\s$, more precisely we take in \eqref{eq:PAMParaproduct} the parabolic scaling $\s=\s_{\mathrm{par}}=(2,1,1)$, which is one more difference with \cite{GIP}.  

In \cite{RegularityStructures} the problem \eqref{eq:PAM} is solved on a regularity structure (again with $\s=\s_{\mathrm{par}}$), and the solution lives in the subspace spanned by the symbols $\{\I(\Xi)\}\cup \{X^k\,:\,k\in \NN^d\}$ and equipped with a model $(\Pi,\Gamma)$ such that 
\begin{align}
\label{eq:PAMModel}
\Pi_x X^k=\Gamma^\uu_{yx}X^k=(y-x)^k,\qquad \Pi_x\I(\Xi)(y)=\Gamma_{yx}^{\uu}\I(\Xi)=I\xi(y)-I\xi(x)\,.
\end{align}
The solution $f$ to \eqref{eq:PAM} is represented by a modelled distribution $F$ of the form 
\begin{align}
\label{eq:PAMModelled}
F=f\,\uu+f \,\I(\Xi) + \sum_{k\in \NN^d:\,|k|_{\s_{\mathrm{par}}=1}} f^{X^k} X^k\,,
\end{align} 
where $f^{X^k}$ are some real valued functions and $f$ is the solution to \eqref{eq:PAM}. Recall from Lemma~\ref{lem:PsiPol} that for $j>0$ the kernel $\Psi^j$ integrates polynomials (and constants) to $0$, and therefore we can rewrite the paraproduct \eqref{eq:PAMParaproduct} in terms of $F$ and the model $(\Pi,\Gamma)$ as 
\begin{align}
\label{eq:PAMFancyParaproduct}
(f\para I\xi)_x =\sum_{j>0} \dintr \dd u\dd v\,\KKu \kkv \Gamma_{vu}^\uu F_u=:P(F,\Gamma^\uu)\,.
\end{align}

This motivates the following definitions.
\begin{definition}
\label{def:Paraproducts}
Let $\T=(A,\tT,G)$ be a regularity structure, let $(\Pi,\Gamma)$ be a model with scaling $\s$ and let $\kk,\,\KK\in \Sw(\RR^d)$ be functions as in \eqref{eq:Psij} (for the same scaling $\s$). We define the following \emph{paraproducts}
\begin{align}
P(F,\Gamma^\alpha)_x &=\sum_{j>0} \dintr \dd u\dd v  \,\KKu \kkv \,\Gamma_{vu}^\alpha F_u \label{eq:ParaproductGamma}
\end{align}
for any $F:\RR^d \rightarrow \tT$ and $\alpha \in A$ for which this is defined. The identity should be read in $\Sw'(\RR^d;\tT_\alpha)$ and is written in formal notation. 
\end{definition}
\begin{remark}
\label{rem:BasisParaproducts}
If $\tT_\alpha$ is finite dimensional for $\alpha\in A$ and we have a basis $\{e_i\}$ for $\tT_\alpha$, we will also write
\begin{align*}
 P(F,\Gamma^{e_i}) :=\sum_{j>0} \dintr \dd u\dd v  \,\KKu \kkv \,\Gamma_{vu}^{e_i} F_u\,,
 \end{align*} 
 where we recall that $\Gamma_{vu}^{e_i} F_u$ denotes the coefficient of $\Gamma_{vu}^{\alpha} F_u \in \tT_\alpha$ in front of $e_i$. In particular we have
 \begin{align*}
 P(F,\Gamma^\alpha)=\sum_{i} P(F,\Gamma^{e_i}) \cdot e_i\,.
 \end{align*}
 For $\alpha=0$ we simply have
 \begin{align*}
 P(F,\Gamma^0)=P(F,\Gamma^\uu)\cdot \uu\,,
 \end{align*}
a notation we already applied in \eqref{eq:PAMFancyParaproduct}.
\end{remark}
\begin{remark}
\label{rem:ParaproductWellDefined}
For measurable and at most polynomially growing $F$ (that is $\|F^\alpha(x)\|_{\tT_\alpha}$ grows at most polynomially in $x\in\RR^d$) the expression \eqref{eq:ParaproductGamma} is well defined. Indeed: Each of the terms in the sum in \eqref{eq:ParaproductGamma} is spectrally supported in a rectangular annulus of the form $2^{j\s}\rA$ (where the rectangular annulus $\rA$ can be chosen independently of $f$ and $j$). This can be easily checked for smooth $(x,y)\mapsto \Gamma_{yx}F_x$, so that the general case follows by approximation. Further by Definition \ref{def:Model} and Lemma \ref{lem:PsiPolScal} one easily sees that each of the terms can be bounded by $2^{-j\kappa}$ for some $\kappa>0$. Lemma \ref{lem:SpectralChunks} then shows that $P(F,\Gamma^\alpha)$ is contained in the Besov space $\Bs^\kappa(\RR^d;\tT_\alpha)$ so that $P(F,\Gamma^\alpha)$ is in fact a (slightly) Hölder continuous function with values in $\tT_\alpha$. 
\glsadd{PFPi}
\end{remark}

\subsubsection*{The space $\PP^\gamma(\RR^d;\tT)$}

Let us come back to our toy example, the parabolic Anderson model. The function $f^{X^k}$ from \eqref{eq:PAMModelled} did not appear in the approach of~\cite{GIP}, which is due to the fact  that according to Lemma~\ref{lem:PsiPol} polynomials are erased in the paraproduct \eqref{eq:PAMFancyParaproduct}. To find a link between the ideas of \cite{GIP} and \cite{RegularityStructures} we therefore need an extra ingredient that forces the $f^{X^k}$ to enter the game. This will be the task of the \emph{structure condition}, which we already motivated for the polynomial framework in \eqref{eq:InterweavingStructureCondition}. 

\begin{definition}
\label{def:StructureCondition}
Let $\T=(A,\tT,G)$ be a regularity structure with a model $(\Pi,\Gamma)$ and scaling $\s\in \NN^d$. Construct the functions $(\Psi^{<N})_{N\geq 0}$, using the scaling vector $\s$, as in \eqref{eq:Psij}. We say that $F:\RR^d\rightarrow \tT$ satisfies the \emph{structure condition below $\gamma\in \RR$} if for all $x\in \RR^d,\,k\in \NN^d$ and $\alpha\in A$ with $\alpha<\gamma$ and $|k|_\s < \gamma-\alpha$ 
and for all large $N \in \NN$ the map $v\mapsto \partial^k\Psi_{x-v}^{< N} \,(F_v^\alpha-\Gamma_{vx}^\alpha F_x)$ is in $L^1(\RR^d;\tT_\alpha)$ and it satisfies
\begin{align}
\lim_{N\rightarrow \infty} \intr \dd v \,\partial^k\Psi_{x-v}^{< N} \,(F_v^\alpha-\Gamma_{vx}^\alpha F_x) = 0\,,
\label{eq:StructureCondition}
\end{align}
where the limit is taken in $\tT_\alpha$.
\glsadd{structurecondition}
\end{definition}

\begin{remark}
\label{rem:StructureConditionSmooth}
Note that we know from Lemma \ref{lem:PsiPol} that $\intr \dd u\,\Psi^{<N}_u=1$ so that we see from the scaling property in Lemma \ref{lem:PsiScal} that $(\Psi^{<N})_{N\geq 0}$ is a (signed) Dirac sequence. From this fact, or alternatively from the decomposition \eqref{eq:LittlewoodPaleyDecomposition}, we conclude that for smooth $F$ and $\Gamma$ condition \eqref{eq:StructureCondition} translates into 
\begin{align}
\label{eq:SmoothStructureCondition}
\partial^k(F^\alpha -\Gamma^\alpha_{\cdot x}F_x)_x=0\,,
\end{align} 	
which is just the identity we announced in \eqref{eq:InterweavingStructureCondition}. The reason why we do not require \eqref{eq:SmoothStructureCondition}	from the start is that typically the smoothness assumption is neither satisfied for the modelled distribution $F$ nor for the map $y\mapsto \Gamma_{yx} F_x$, as one may observe in \eqref{eq:PAMModel}. 
\end{remark}

Let us see how to use the structure condition in order to determine the coefficient $f^{X^k}$ in \eqref{eq:PAMModelled}. Formally using the structure condition in the form \eqref{eq:SmoothStructureCondition}, and ignoring possible smoothness issues for the sake of simplicity, we obtain for $\alpha=0$ and $k\in \NN^d$ with $|k|_{\s_{\mathrm{par}}}=1$ via \eqref{eq:PAMModelled} and \eqref{eq:PAMModel}
\begin{align*}
0\oset{\eqref{eq:SmoothStructureCondition} for $\alpha=0$}{=} (\partial^k f)_x \uu -f_x (\partial^k I\xi)_x \uu-k! f^{X^k}_x \uu\,,
\end{align*}
so that the only possible choice for $f^{X^k}$ is given by
\begin{align*}
f^{X^k}_x=\frac{1}{k!}\big((\partial^k f)_x -f_x \cdot (\partial^k I\xi)_x \big)\,.
\end{align*}
Although this argument should be executed with more care to guarantee that all expressions are well defined\footnote{A rigorous approach would consider \eqref{eq:StructureCondition} instead and show the existence of the limit using the paracontrolled structure of $f$. Compare the proof of \cite[Theorem 6.2.3]{Dissertation}.}, we can already observe that Definition \ref{def:StructureCondition} really fixes $f^{X^k}$ in terms of $f$. Equipped with the structure condition there is now hope that we might find a link between the paracontrolled approach in \cite{GIP} and the description via regularity structures.

As we have already indicated in \eqref{eq:fsharp} the object that is really considered in \cite{GIP} is the difference $f^\sharp$ of the solution $f$ of the considered equation with a paraproduct. In view of Definition \ref{def:Paraproducts} we therefore propose the following definition. 

\begin{definition}
\label{def:Pgamma}
Let $\gamma\in \RR$, let $\T=(A,\tT,G)$ be a regularity structure and let $(\Pi,\Gamma)$ be a model with scaling $\s\in \NN^d$. We say that $F:\,\RR^d\rightarrow \tT^-_\gamma$ with $F\in C_b(\RR^d;\tT)$ is in $\PP^\gamma(\RR^d;\tT)=\PP^\gamma(\RR^d;\tT,\Gamma)$ if for all $\alpha \in A$ with $\alpha < \gamma$
\begin{align}
\label{eq:Pgamma}
F^{\sharp,\alpha}:=F^\alpha -P(F,\Gamma^\alpha)\in \Bs^{\gamma-\alpha}(\RR^d;\tT_\alpha)
\end{align}
and if $F$ satisfies the structure condition \eqref{eq:StructureCondition} below $\gamma$. We define the semi-norm 
\begin{align*}
\|F\|_{\PP^\gamma(\RR^d;\tT)}:=\|F\|_{C_b(\RR^d;\tT)}+\sup_{\alpha\in A} \|F^{\sharp,\alpha}\|_{\Bs^{\gamma-\alpha}(\RR^d;\tT_\alpha)}\,.
\end{align*}
\end{definition}

As we already pointed out above, Definition \ref{def:Paraproducts} is strictly speaking not a generalization of the approach of \cite{GIP}, because we use space-time
paraproduct \eqref{eq:PAMParaproduct} instead of their modified paraproduct $\mpara$. The space-time paraproduct might seem more natural, but it comes with a price: Since the solutions to parabolic equations like \eqref{eq:PAM} are only defined for positive times, we have to extend them to negative times  to fit them into our framework. We thus need extension operations (and spaces that allow for a blow-up around $t=0$) in order to practically apply paraproduct techniques to differential equations in regularity structures. We will not deal with these technical issues here and refer to \cite{Dissertation} for a few results and concepts in that direction. 


\section{Controlling modelled distributions via paraproducts}
\label{sec:MainTheorem}

We now state and prove the main result of this article. We show that the spaces $\DD^\gamma(\RR^d,\tT)$ from Definition \ref{def:Dgamma} and the space $\PP^\gamma(\RR^d;\tT)$ introduced in Definition \ref{def:Pgamma} are identical with equivalent norms. For technical reasons we have to exclude the case that $\gamma\in \RR$ is contained in the locally finite set
\glsadd{AN}
\begin{align}
\label{eq:AN}
\AN:=A+\NN\,.
\end{align}
This is necessary since we want to apply for the spaces $\Bs^{\gamma-\alpha}(\RR^d;\tT_\alpha)$, appearing in the Definition of \ref{def:Pgamma}, the Hölder characterization from Lemma \ref{lem:Hoelder}. We can interpret the following theorem as a generalization of Lemma \ref{lem:Hoelder}, so that the exclusion of $A_\NN$ corresponds to the restriction $\gamma\not\in \NN$ required there. We do not expect that the result holds for $\gamma \in A_\NN$.

\begin{theorem}
\label{thm:BridgeTheorem}
Let $\T=(A,\tT,G)$ be a regularity structure and let $(\Pi,\Gamma)$ be a model with scaling $\s\in \NNx^d$. We then have for any $\gamma\in \RR \backslash \AN$
\begin{align}
\label{eq:Bridge}
\DD^\gamma(\RR^d;\tT)=\PP^\gamma(\RR^d;\tT)
\end{align}
with equivalent norms (where the equivalence constants can both be chosen proportionally to some polynomial in $\|\Gamma\|_\gamma$). 
\end{theorem}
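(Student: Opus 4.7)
The plan is to verify the two inclusions $\DD^\gamma(\RR^d;\tT) \subseteq \PP^\gamma(\RR^d;\tT)$ and $\PP^\gamma(\RR^d;\tT) \subseteq \DD^\gamma(\RR^d;\tT)$ separately by direct estimation of the respective defining bounds, tracking constants polynomial in $\|\Gamma\|_\gamma$ throughout.

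For $\DD^\gamma \subseteq \PP^\gamma$, I start with $F \in \DD^\gamma$ and upgrade, via Remark~\ref{rem:LongDistancexy} and the triangularity \eqref{eq:BoundGamma}, the defining estimate $\|F^\alpha_v - \Gamma^\alpha_{vx} F_x\|_{\tT_\alpha} \lesssim \|v-x\|_\s^{\gamma-\alpha}$ to a \emph{global} bound valid for all $v,x \in \RR^d$. The structure condition \eqref{eq:StructureCondition} is then immediate from Lemma~\ref{lem:PsiPolScal} with $a = \gamma - \alpha > |k|_\s$, giving an integral bounded by $2^{-N(\gamma-\alpha-|k|_\s)} \to 0$. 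For the Besov regularity of $F^{\sharp,\alpha} = F^\alpha - P(F,\Gamma^\alpha)$, the key identity (using $\int \Psi^{<j-1} = 1$ from Lemma~\ref{lem:PsiPol} for $j \geq 1$) is
\begin{align*}
\varDelta_j F^\alpha(x) - T_j(x) = \iint \Psi^{<j-1}_{x-u}\,\Psi^{j}_{x-v}\,\bigl(F^\alpha_v - \Gamma^\alpha_{vu} F_u\bigr)\,\dd u\,\dd v,
\end{align*}
where $T_j$ denotes the $j$-th summand of $P(F,\Gamma^\alpha)$. Combining the global Hölder bound with the $L^1$-scaling of $\Psi^j,\Psi^{<j-1}$ (Lemma~\ref{lem:PsiScal}) yields $\|\varDelta_j F^\alpha - T_j\|_{L^\infty} \lesssim 2^{-j(\gamma-\alpha)}$. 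Since each $\varDelta_j F^\alpha - T_j$ is spectrally localized in an annulus $\sim 2^{j\s}\rA$, Lemma~\ref{lem:SpectralChunks} reassembles these summands into an element of $\Bs^{\gamma-\alpha}$, after handling the (smooth) low-frequency parts separately.

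For the reverse inclusion $\PP^\gamma \subseteq \DD^\gamma$, I argue by downward induction over the finite set $A \cap (-\infty,\gamma)$. For the maximal such $\alpha = \alpha_{\max}$, both $\Gamma^\alpha_{yx} F_x$ collapses to $F^\alpha_x$ and $P(F,\Gamma^\alpha)$ vanishes (the $v$-integral against $\Psi^j$ with $j>0$ annihilates the $v$-independent integrand $F^\alpha_u$, by Lemma~\ref{lem:PsiPol}), so $F^\alpha = F^{\sharp,\alpha} \in \Bs^{\gamma-\alpha}$; the structure condition forces the intermediate derivatives $\partial^k F^\alpha$ to vanish at every point, and Lemma~\ref{lem:Hoelder} yields the required bound $\|F^\alpha_y - F^\alpha_x\| \lesssim \|y-x\|_\s^{\gamma-\alpha}$. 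For the inductive step, abbreviate $H(y) := \Gamma^\alpha_{yx} F_x - P(F,\Gamma^\alpha)_y$, so that
\begin{align*}
F^\alpha_y - \Gamma^\alpha_{yx} F_x = \bigl[F^{\sharp,\alpha}_y - T^{\gamma-\alpha}_{x;y-x} F^{\sharp,\alpha}\bigr] - \bigl[H(y) - T^{\gamma-\alpha}_{x;y-x} F^{\sharp,\alpha}\bigr].
\end{align*}
The first bracket is $O(\|y-x\|_\s^{\gamma-\alpha})$ by Lemma~\ref{lem:Hoelder} applied to $F^{\sharp,\alpha} \in \Bs^{\gamma-\alpha}$. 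Substituting $F^\alpha = F^{\sharp,\alpha} + P(F,\Gamma^\alpha)$ into the structure condition and using that $F^{\sharp,\alpha}$ is smooth enough for the Dirac sequence $\Psi^{<N}$ to reproduce its classical derivatives, one obtains the identification $\partial^k F^{\sharp,\alpha}(x) = \lim_N \partial^k(\Psi^{<N}*H)(x)$ for $|k|_\s < \gamma-\alpha$; interpreting this, $T^{\gamma-\alpha}_{x;y-x} F^{\sharp,\alpha}$ is exactly the ``regularized Taylor polynomial'' of $H$ at $x$. It therefore remains to prove the Hölder-type bound $\|H(y) - T^{\gamma-\alpha}_{x;y-x} F^{\sharp,\alpha}\|_{\tT_\alpha} \lesssim \|y-x\|_\s^{\gamma-\alpha}$, which is the crux of this direction.

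To obtain this bound, I decompose $\Gamma^\alpha_{vu} F_u = \sum_{\beta \geq \alpha}\Gamma^\alpha_{vx}\Gamma^\beta_{xu}F_u$ via Chen's relation \eqref{eq:ModelChensRelation}; the $\beta=\alpha$ contribution in $P(F,\Gamma^\alpha)$ vanishes (again by $\int\Psi^j=0$ for $j>0$), leaving only contributions with $\beta>\alpha$, which is where the induction hypothesis enters: I may replace $\Gamma^\beta_{xu} F_u$ by $F^\beta_u$ modulo an error controlled by $\|u-x\|_\s^{\gamma-\beta}$. Splitting the sum over $j$ in $P(F,\Gamma^\alpha)_y$ paradyadically into low frequencies $j < j_0$ (with $2^{-j_0} \sim \|y-x\|_\s$) and high frequencies $j \geq j_0$, using the scaling properties of $\Psi^j, \Psi^{<j-1}$, the pointwise bounds on $\Gamma^\alpha_{vx}$, and the induction hypothesis, produces the required estimate. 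The main obstacle will be the delicate paradyadic bookkeeping in this last step, where one must carefully show that the ``low-frequency part'' of the paraproduct reconstructs precisely the Taylor polynomial $T^{\gamma-\alpha}_{x;y-x} F^{\sharp,\alpha}$ up to an error of order $\|y-x\|_\s^{\gamma-\alpha}$, while the ``high-frequency part'' is directly summable; the exclusion $\gamma \notin \AN$ enters here to guarantee that no order appearing in the Taylor expansions coincides with an element of $A+\NN$, so that Lemma~\ref{lem:Hoelder} applies at every stratum.
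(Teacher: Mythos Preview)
Your plan is correct and matches the paper's proof essentially step for step: the easy direction via the identity $\varDelta_j F^\alpha - T_j = \iint \Psi^{<j-1}\Psi^j(F^\alpha_v-\Gamma^\alpha_{vu}F_u)$ together with Lemma~\ref{lem:SpectralChunks}, and the hard direction by downward induction on $\alpha$, splitting off the Taylor remainder of $F^{\sharp,\alpha}$ via Lemma~\ref{lem:Hoelder}, identifying the remaining Taylor coefficients through the structure condition, and controlling the residual by a low/high frequency split. The only notable differences are organizational: the paper packages the limit argument into an explicit truncation $D^N=\sum_{j\le N}D^N_j$ (showing convergence to the second bracket and a uniform bound simultaneously), and inserts the induction hypothesis by writing $\Gamma_{vu}F_u=\Gamma_{vu}(F_u-\Gamma_{ux}F_x)+\Gamma_{vx}F_x$ rather than your Chen factorization $\Gamma_{vu}=\Gamma_{vx}\Gamma_{xu}$; these lead to the same estimates. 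One slip to fix: after Chen's relation the induction hypothesis gives $\Gamma^\beta_{xu}F_u\approx F^\beta_x$ (not $F^\beta_u$) with error $\|u-x\|_\s^{\gamma-\beta}$, which is what your stated error bound actually reflects.
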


\begin{proof}
\allowdisplaybreaks
We assume without loss of generality that $A$ contains only homogeneities below $\gamma$. We will include polynomials in $\|\Gamma\|_\gamma$ in the implicit constant indicated by ``$\lesssim$'' and we omit the domain ``$\RR^d$'' under integration signs. 
 
For the easy direction $\DD^\gamma(\RR^d;\tT)\subseteq \PP^\gamma(\RR^d;\tT)$ note first that $F\in \DD^\gamma(\RR^d;\tT)$ already implies that the structure condition \eqref{eq:StructureCondition} is satisfied below $\gamma$: Indeed, for $\alpha\in A$ with $\alpha<\gamma$ and $k\in \NN^d_{<\gamma-\alpha}$ we have
\begin{align*}
\left\|\int \dd v\, \,\partial^k \Psi^{<N}_{x-v} (F^\alpha_v-\Gamma^\alpha_{vx}F_x)\right\|_{\tT_\alpha} \overset{\mbox{\tiny Lem. \ref{lem:PsiPolScal}}} \lesssim \|F\|_{\DD^\gamma(\RR^d;\tT)} \cdot 2^{-N(\gamma-\alpha-|k|_\s)} \overset{N\rightarrow \infty}{\rightarrow} 0\,.
\end{align*}
To derive the analytic bound we follow similar ideas as in \cite[Subsection 6.2.]{GIP}: We can rewrite for $x\in \RR^d$ and $\alpha \in A$
\begin{align*}
F^\alpha_x-P(F,\Gamma^\alpha)_x=\sum_{j>0} \int \dd v\,\Big( \kkv  F^\alpha_v -\int \dd u\KKu\,\kkv \Gamma^\alpha_{vu} F_u \Big) + (\Delta_{\leq 0}F^\alpha)_x\,.
\end{align*}
As $\Delta_{\leq 0}F^\alpha=\Psi^{\leq 0} \ast F^\alpha$ is smooth with bounded derivatives we only have to consider the first term on the right hand side. 
We already noted in Remark~\ref{rem:ParaproductWellDefined} that the $j$-th summand is spectrally supported in an annulus $2^{j \s} \rA$, so by Lemma~\ref{lem:SpectralChunks} it suffices to bound it by $  2^{-j(\gamma-\alpha)} \nDl{F}$. We have, using $ \int \KKu 1=1$ (Lemma \ref{lem:PsiPol}),
\begin{align*}
\int \dd v\, \Big(\kkv F^\alpha_v - \int \dd u \KKu \kkv \Gamma^\alpha_{vu} F_u\Big)=\dint \dd u \dd v\, \KKu \kkv (F^\alpha_v-\Gamma^\alpha_{vu} F_u)\,.
\end{align*}
Now, by assumption 
\[
\|F^\alpha_v-\Gamma^\alpha_{vu} F_u\|_{\tT_\alpha}\lesssim \nDl{F} \|u-v\|_\s^{\gamma-\alpha}\lesssim \nDl{F} (\|u-x\|_\s^{\gamma-\alpha}+\|v-x\|_\s^{\gamma-\alpha})\,,
\]
so that we have with Lemma \ref{lem:PsiPolScal} (and Lemma \ref{lem:PsiScal}) the estimate 
\begin{align*}
 \left\|\dint \dd u \dd v\, \KKu \kkv (F^\alpha_v-\Gamma^\alpha_{vu} F_u)\right\|_{\tT_\alpha}\lesssim \nDl{F} 2^{-j(\gamma-\alpha)}\,,
 \end{align*}
which proves $\DD^\gamma(\RR^d;\tT)\subseteq \PP^\gamma(\RR^d;\tT)$. 

Let us now address the delicate direction of the proof, that is $\PP^\gamma(\RR^d;\tT)\subseteq \DD^\gamma(\RR^d;\tT)$. Let $F\in \PP^\gamma(\RR^d;\tT)$ and assume without loss of generality that $\nPl{F}\leq 1$. We will show by induction over decreasing homogeneities in $A$ that for all $x,y\in \RR^d$ and $\alpha\in A$
\begin{align*}
\|F^\alpha_y-\Gamma^\alpha_{yx} F_x\|_{\tT_\alpha}\lesssim    \|y-x\|_\s^{\gamma-\alpha} \,,
\end{align*}
which proves the claim. 
Note that it suffices to take $\|x-y\|_\s\leq 1$, compare Remark \ref{rem:LongDistancexy}.
We start the induction with $\alpha=\max A$ (which exists since we assumed $\max A<\gamma<\infty$). By requirement \eqref{eq:GroupActionHomogeneousSpace} in the definition of $\Gamma_{yx}$ we have $\Gamma^\alpha_{yx}F_x=F^\alpha_x$ and thus  $P(F,\Gamma^{\alpha})=0$ due to Lemma \ref{lem:PsiPol}. Hence $F^\alpha=F^{\sharp,\alpha}\in \Bs^{\gamma-\alpha}(\RR^d;\tT_\alpha)$ and from the structure condition \eqref{eq:StructureCondition} we obtain that $\partial^k F^\alpha_x =0$ for $k\in \NN^d$ with $0<|k|_\s<\gamma-\alpha$ (if any such $k$ exist). Thus
\begin{align*}
\|F_y^\alpha-\Gamma_{yx}^\alpha F_x\|_{\tT_\alpha}&=\|F^\alpha_y-F^\alpha_x\|_{\tT_\alpha}= \Big\|F_y^\alpha-F_x^\alpha-\sum_{k\in \NN^d_{<\gamma-\alpha}} \partial^k F_x^{\alpha}\,(y-x)^k\Big\|_{\tT_\alpha}\\
&=\Big\|F^{\sharp,\alpha}_y-F^{\sharp,\alpha}_x-\sum_{k\in \NN^d_{<\gamma-\alpha}} \partial^k F_x^{\sharp,\alpha}\,(y-x)^k\Big\|_{\tT_\alpha}\lesssim \|y-x\|_\s^{\gamma-\alpha}\,,
\end{align*}
where we applied Lemma~\ref{lem:Hoelder}. 

Let us now assume that we already know for some $\alpha \in A$ that for any $\alpha'\in A,\,\alpha'
>\alpha$
\begin{align}
\label{eq:BridgeTheoremInductionHypothesis}
\|F_y^{\alpha'}-\Gamma_{yx}^{\alpha'} F_x\|_{\tT_{\alpha'}}\lesssim   \|y-x\|_\s^{\gamma-\alpha'} \,.
\end{align}
We then show that \eqref{eq:BridgeTheoremInductionHypothesis} does also hold for all $\alpha'=\alpha$. To this end we reshape
\begin{align}
&F_y^\alpha-\Gamma_{yx}^\alpha F_x =F_y^\alpha -F^\alpha_x -\sum_{\alpha'\in A:\,\alpha'>\alpha} \Gamma^{\alpha}_{yx}F_x^{\alpha'} \nonumber
\\
&=F^{\sharp,\alpha}_y-F^{\sharp,\alpha}_x-\sum_{k:\,0<|k|_\s<\gamma-\alpha} \frac{1}{k!} \partial^k F^{\sharp,\alpha}_x (y-x)^k \label{eq:BridgeThmLocal1} \\
&+P(F,\Gamma^\alpha)_y-P(F,\Gamma^\alpha)_x+\sum_{k:\,0<|k|_\s<\gamma-\alpha} \frac{1}{k!} \partial^k \left(F^\alpha-P(F,\Gamma^\alpha)\right)_x (y-x)^k-\sum_{\alpha'\in A:\,\alpha'>\alpha} \Gamma^{\alpha}_{yx}F_x^{\alpha'}\,. \nonumber
\end{align}
Since \eqref{eq:BridgeThmLocal1} already decays in the right order due to Lemma \ref{lem:Hoelder} (and the assumption $\gamma\notin \AN$), we are only left with the last line which below we identify as the limit for $N\rightarrow \infty$ (in $\tT_\alpha$ for every $x,\,y$) of 
\begin{align*}
D^N &:=\sum_{j\leq N} D^N_j\\
 &:=\sum_{j\leq N} \Bigg[ \int \dd w\, \Big(\kk_{y-w}-\sum_{k\in \NN^d_{<\gamma-\alpha}}\frac{1}{k!}\partial^k \kk_{x-w} (y-x)^k\Big ) \Big( P(F,\Gamma^\alpha)_w -\sum_{\alpha'\in A:\,\alpha'
>\alpha} \Gamma^{\alpha}_{wx}F_x^{\alpha'} \Big)    
 \Bigg]\,.
\end{align*}
Indeed, we have the following three convergences:
\begin{align}
\label{eq:DN1}
&\sum_{j\leq N} \int \dd w\, \Big(\kk_{y-w}-\kk_{x-w}\Big ) P(F,\Gamma^\alpha)_w 
\\ &\hspace{60pt} \overset{N\rightarrow \infty}{\longrightarrow} P(F,\Gamma^\alpha)_y-P(F,\Gamma^\alpha)_x\,, \nonumber \\
& -  \sum_{j\leq N}\,\,  \sum_{0<|k|_\s<\gamma-\alpha} \int \dd w\, \frac{1}{k!} \partial^k \kk_{x-w} \Big(P(F,\Gamma^\alpha)_w-\sum_{\alpha'\in A:\,\alpha'>\alpha} \Gamma^\alpha_{wx} F^{\alpha'}_x\Big) (y-x)^k \label{eq:DN2}
\\ &\hspace{60pt} \overset{N\rightarrow \infty}{\longrightarrow} \sum_{0<|k|_\s<\gamma-\alpha} \frac{1}{k!} \partial^k (F^\alpha-P(F,\Gamma^\alpha))_x (y-x)^k \,, \nonumber\\
  &\sum_{j\leq N}  \int \dd w\, \Big(\kk_{y-w}-\kk_{x-w}\Big ) \sum_{\alpha'\in A:\,\alpha'>\alpha} \Gamma^{\alpha}_{wx}F_x^{\alpha'} \label{eq:DN3}
  \\ &\hspace{60pt} \overset{N\rightarrow \infty}{\longrightarrow} \sum_{\alpha'\in A:\,\alpha'>\alpha} \Gamma^{\alpha}_{yx}F_x^{\alpha'}-\sum_{\alpha'\in A:\,\alpha'>\alpha} \Gamma^{\alpha}_{xx}F_x^{\alpha'}=\sum_{\alpha'\in A:\,\alpha'>\alpha} \Gamma^{\alpha}_{yx}F_x^{\alpha'} \nonumber\,,
\end{align}
where we used (as in Remark \ref{rem:StructureConditionSmooth}) that $\Psi^{\leq N}=\sum_{j\leq N} \Psi^j$ is a Dirac sequence, for the second term in~\eqref{eq:DN2} we applied the structure condition \eqref{eq:StructureCondition}, and in~\eqref{eq:DN3} we used and the continuity of $\Gamma^\alpha_{\cdot x} F^{\alpha'}_x$ \footnote{A short computation shows that Definition \ref{def:Model} already implies (Hölder) continuity of the maps $y\mapsto \Gamma_{yx}^{\alpha}\tau$ for $\tau\in \tT$ and $x\in \RR^d$.}. Writing $D^N=\eqref{eq:DN1}+\eqref{eq:DN2}+\eqref{eq:DN3}$ we see the claimed convergence of $D^N$. We now show that, uniformly in $N$, 
\begin{align}
\label{eq:BoundDN}
\|D^N\|_{\tT_\alpha}\lesssim \|y-x\|_\s^{\gamma-\alpha}\,.
\end{align}
Note that we can reshape (with $\varDelta_{>0}:=\sum_{j>0} \varDelta_j$)
\begin{align*}
P(F,\Gamma^\alpha)_w &=\sum_{j>0}\dint \dd u \dd v\,\KK_{w-u} \kk_{w-v} \Gamma^\alpha_{vu}F_u  \\
&=\sum_{j>0} \dint \dd u \dd v\, \KK_{w-u} \kk_{w-v} \Gamma^\alpha_{vu} (F_u-\Gamma_{ux}F_x) +(\varDelta_{>0} \Gamma_{\cdot x}^\alpha F_x)_w\\
&=\sum_{j>0} \sum_{\alpha'\in A:\,\alpha'>\alpha} \dint \dd u \dd v\, \KK_{w-u} \kk_{w-v} \Gamma^\alpha_{vu} (F^{\alpha'
}_u-\Gamma_{ux}^{\alpha'}F_x) +\sum_{\alpha'\in A:\,\alpha'>\alpha}(\varDelta_{>0} \Gamma_{\cdot x}^\alpha F_x^{\alpha'})_w \,,
\end{align*}
where in the last line we used \eqref{eq:GroupActionHomogeneousSpace} for both terms  and  also that $\int \dd v\, \kk_v 1 =0$ for $j>0$ (by Lemma \ref{lem:PsiPol}) to cancel the $\alpha'=\alpha$ components. We can therefore rewrite $D_j^N$ as follows (with $R^{\gamma-\alpha}_{x;y-x}$ being the Taylor remainder as in \eqref{eq:TaylorRemainder})
\begin{align}
D^N_j&=\sum_{j\leq N}  \sum_{\alpha'\in A:\,\alpha'>\alpha}\int \dd w\, R^{\gamma-\alpha}_{x-w;y-x} \kk  \\
&\quad \times \left[ \sum_{i>0}\dint \dd u \dd v\, \Psi_{w-u}^{<i-1} \Psi_{w-v}^i \Gamma^\alpha_{vu}(F^{\alpha'}_u-\Gamma_{ux}^{\alpha'}F_x) +(\varDelta_{\leq 0} \Gamma_{\cdot x}^\alpha F_x^{\alpha'})_w\right] \nonumber  \\
&\oset{$(\ast)$}{=}\sum_{j\leq N}  \sum_{\alpha'\in A:\,\alpha'>\alpha}\int \dd w R^{\gamma-\alpha}_{x-w;y-x} \kk \left[ \sum_{i>0:\,i\sim j}\dint \dd u \dd v\, \Psi_{w-v}^{<i-1} \Psi_{w-u}^i \Gamma^\alpha_{vu}(F^{\alpha'}_u-\Gamma_{ux}^{\alpha'}F_x)\right] \label{eq:MainTheorem2}  \\
&\quad + \sum_{\alpha'\in A:\,\alpha'>\alpha} \int \dd w\, \Psi^{<N+1}_{w} \cdot R^{\gamma-\alpha}_{x-w;y-x} (\Delta_{\leq 0} \Gamma_{\cdot x}^\alpha F_x^{\alpha'}) \,,   \nonumber 
\end{align}
where in $(\ast)$ we used spectral support properties to restrict the inner sum to $i\sim j$ and the convolution-like structure to move the Taylor remainder to $\Delta_{\leq 0} \Gamma_{\cdot x}^\alpha F_x^{\alpha'}$ in the last term. The last term can be estimated by $\|x-y\|_\s^{\gamma-\alpha}$ via Lemma \ref{lem:Taylor} and Lemma \ref{lem:PsiPolScal} if we use that for every $k\in \mathbb{N}^d$ there is a $C>0$ such that
\begin{align}
\big\|\partial^k \left( \varDelta_{\leq 0} \Gamma^{\alpha}_{\cdot x} F^{\alpha'}_x \right)_{w-x+v^k_t(y-x)}\big\|_{\tT_\alpha}\lesssim  (1+\|w\|_{\s}^C)\,, \label{eq:BridgeThmLocal2}
\end{align}
which can be easily checked by direct computation. 
To handle the term \eqref{eq:MainTheorem2} we first estimate the sum in the square brackets: By Definition \ref{def:Model} and the induction hypothesis we have that
\begin{align*}
\sum_{\alpha'\in A:\,\alpha'>\alpha}\|\Gamma^\alpha_{vu}(F^{\alpha'}_u-\Gamma_{ux}^{\alpha'}F_x) \|_{\tT_\alpha}\lesssim  \sum_{\alpha'\in A:\,\alpha'>\alpha}\|v-u\|_\s^{\alpha'-\alpha}\cdot \|u-x\|_\s^{\gamma-\alpha'}\,.
\end{align*}
Lemma \ref{lem:PsiPolScal} thus yields
\begin{align}
\sum_{i:\,i\sim j}\sum_{\alpha'\in A:\,\alpha'>\alpha} &\left\| \dint \dd u \dd v\, \Psi_{w-v}^{<i-1} \Psi_{w-u}^i \Gamma^\alpha_{vu}(F^{\alpha'}_u-\Gamma_{ux}^{\alpha'}F_x) \right\|_{\tT_\alpha} \nonumber \\
&\lesssim \sum_{\alpha'\in A:\,\alpha'>\alpha}   2^{-j(\alpha'-\alpha)} 2^{-j(\gamma-\alpha')} \lesssim  2^{-j(\gamma-\alpha) }\,.
\label{eq:BridgeThmLocal3}
\end{align}
The rest of the estimate for \eqref{eq:MainTheorem2} then follows by the same line of reasoning as in Lemma \ref{lem:Hoelder}: Let $j'$ be such that $2^{-j'-1}< \|x-y\|_\s\leq 2^{-j'}$ and bound the sum in \eqref{eq:MainTheorem2} (up to a constant factor) using \eqref{eq:BridgeThmLocal3} by
\begin{align*}
\sum_{j\leq j'} \sum_{k\in \NN^d_{> \gamma-\alpha}} \|x-y\|_\s^{|k|_\s} \,2^{j(|k|_\s-(\gamma-\alpha))}+\sum_{N \geq j>j'} \sum_{k\in \NN^d_{<\gamma-\alpha}} \|x-y\|_\s^{|k|_\s} 2^{-j(\gamma-\alpha-|k|_\s)} \lesssim \|x-y\|_\s^{\gamma-\alpha}\,,
\end{align*}
where we applied Lemma \ref{lem:Taylor} in the low-frequency case and in both cases the $L^1$-bound from Lemma \ref{lem:PsiScal}. We have thus shown \eqref{eq:BoundDN} so that in total
\begin{align*}
\|F_y^\alpha-\Gamma^\alpha_{yx}F_x\|_{\tT_\alpha}=\lim_{N\rightarrow \infty} \|D_N\|_{\tT_\alpha}\lesssim  \|x-y\|_\s^{\gamma-\alpha}\,,
\end{align*}
which closes the induction and finishes the proof.
\end{proof}

\begin{remark}
 In \cite{ReconstructionBesov} the authors introduce a more general space $\DD^\gamma_{p,q}(\RR^d;\tT)$ which generalizes the space $\DD^\gamma(\RR^d;\tT)$ from Definition~\ref{def:Dgamma} (the special case $\DD^\gamma_{p,p}(\RR^d)$ was already introduced before by \cite{ProemelTeichmann}). Our approach would allow us to also define a ``Besov space'' $\mathscr{B}^\gamma_{p,q,\s}(\RR^d;\tT)$ (compare Remark \ref{rem:BesovGeneralIntegrability}), and we expect that it is equal to $\DD^\gamma_{p,q}(\RR^d)$. But for the sake of simplicity we restrict ourselves to the case $p=q=\infty$ in this work.  	
\end{remark}

{\footnotesize
\bibliography{DPSymmetry}
\bibliographystyle{alpha}}
\end{document}